\providecommand{\email}[1]{\href{mailto:#1}{\nolinkurl{#1}}}
\setlist[enumerate,1]{label={(\roman*)}}
\setlist[enumerate,2]{label={(\alph*)}}
\setlist[enumerate,3]{label={(\Roman*)}}
\newcommand{\newsstheorem}[2]{
  \newaliascnt{#1}{dummy}
  \newtheorem{#1}[#1]{#2}
  \aliascntresetthe{#1}
  \expandafter\def\csname #1autorefname\endcsname{#2}
}
\numberwithin{dummy}{section}
\theoremstyle{plain}
\theoremstyle{definition}
\theoremstyle{remark}
\newenvironment{eqnarr*}{\begin{IEEEeqnarray*}{rCl}}{\end{IEEEeqnarray*}\ignorespacesafterend}
\newcommand\RR{\mathbb{R}}
\newcommand\PP{\mathbb{P}}
\newcommand\EE{\mathbb{E}}
\newcommand\Indic[1]{\Ind_{\{#1\}}}
\newcommand\e{{\rm e}}
\newcommand\Ind{\mathbbm{1}}
\newcommand\mathof[1]{{\operator@font#1}} \makeatother
\newcommand\dd{\mathof{d}}
\DeclarePairedDelimiterX\ip[2]{\langle}{\rangle}{#1,#2}
\begin{document}

\title{On a Feynman-Kac approach\\ to growth-fragmentation semigroups \\and their asymptotic behaviors}
\author{Jean Bertoin\footnote{Institute of Mathematics, University of Zurich, Switzerland}  }
\date{\unskip}
\maketitle 
\thispagestyle{empty}

\begin{abstract}
  \noindent This work develops  further a probabilist approach to the asymptotic behavior of growth-fragmentation semigroups  via the Feynman-Kac formula, which was introduced in a joint article with A.R. Watson \cite{BW}. Here, it is first shown that the sufficient condition for a Malthusian behavior which was established in \cite{BW}, is also necessary. We then provide a simple criterion to ensure exponential speed of convergence, which enables us to treat cases than were not covered previously in the literature.  
    
\end{abstract} 
{\small 
\textit{Keywords:}
Growth-fragmentation equation, Feynman--Kac formula,
Malthus exponent, Krein-Rutman theorem, exponential ergodicity.\newline
\textit{2010 Mathematics Subject Classification:}
35Q92, 
37A30, 
47D06, 
60G46,  
60J25. 
}

\section{Introduction}\label{s:intro}
Imagine a population, for instance of cells or of bacterias, where individuals grow and divide as time passes, and such that the evolution of each individual only depends on its own mass, without interaction between different individuals. Assume also that when a division event happens, the sum of the masses of daughters resulting from the division equals the mass of the mother before division. In other words, the total mass is a preserved quantity when division occurs, but may grow between consecutive division events. 
Growth-fragmentation equations provide a mathematical model for such dynamics, by describing the evolution of concentrations of individuals as a function of masses and time. The rate of growth of an individual may depend on its mass, and the rate at which a mother produces daughters 
 may also depend both on the mass of the mother just before division and on the masses of its daughters right after the division.
 
Specifically, one considers an operator of the form
\begin{equation}\label{e:A}
  \mathcal{A}f(x) = c(x)f'(x) + \int_0^x f(y) k(x,y) \dd y - K(x) f(x), \qquad x>0,  
\end{equation}
which is defined on some domain ${\mathcal D}_{\mathcal A}$ of  smooth functions $f: (0,\infty)\to \RR$. 
Here $c(x)$ describes the growth rate as a function of the mass, and 
 $k(x,y)$ the rate at which a daughter particle 
with mass $y$ appears as the result of the division of a mother particle
with mass $x>y$. Finally, $K(x)$ is the total rate of division of individuals
with mass $x$, and the assumption of conservation of mass
at division events thus translates into
\begin{equation}\label{e:cl}
 K(x)=\int_0^x  \frac{y}{x} k(x,y)\dd y.
\end{equation}

Under fairly simple general assumptions on the rates $c$ and $k$ that will be introduced later on, ${\mathcal A}$ is the infinitesimal generator of a strongly continuous positive semigroup $(T_t)_{t\geq 0}$, so that a growth-fragmentation equation can be given in the form
$$\frac{\dd T_tf}{\dd t}= {\mathcal A}T_tf\,,\qquad f\in {\mathcal D}_{\mathcal A}.$$
In this setting, the measure $\mu_t(x,\dd y)$ on $(0,\infty)$ such that 
$$T_tf(x) = \int_{(0,\infty} f(y)\mu_t(x, \dd y)\coloneqq \ip{\mu_t(x, \cdot)}{f}$$
describes the concentration at time $t$  of individuals of mass $y$, when one starts at time $0$ from a unit concentration of individuals of mass $x$, i.e. $\mu_0(x, \dd y)=\delta_x(\dd y)$. 

In general, there is of course no explicit expression for the growth-fragmentation semigroup $(T_t)_{t\geq0}$, and many works in this area are concerned with its large time asymptotic behavior. See in particular \cite{BCG-fine,  BPR12, BerGab, CCM11, DoumGab, Gabriel, Mic06, MMP05, MS16, PertRyz}  and further  references therein. Typically, one expects that under adequate  assumptions  on the rates of growth and of fragmentation, there exists a principal eigenvalue $\rho\in\RR$ 
such that 
\begin{equation}\label{e:paradigm}
\lim_{t\to \infty} \e^{-\rho t} T_tf(x) = h(x)\ip{\nu}{f},\qquad x>0,
\end{equation}
at least for every continuous and compactly supported function $f: (0,\infty)\to \RR$.
Here, $\nu(\dd y)$ is a Radon measure on $(0,\infty)$, which is often referred to as the asymptotic profile, and $h$ some positive function. We stress that \eqref{e:paradigm} may fail; see for instance \citet{DouEsc} and \citet{Gabriel}.

When  \eqref{e:paradigm} holds, it is further important to be able to estimate the speed of convergence. Indeed, say for $\rho >0$, 
an indefinite exponential growth is of course unrealistic in practice, and the growth-fragmentation equation can only be pertinent 
for describing rather early stages of the evolution of a population when certain effects such as competition between individuals  for space or resources can be neglected. As a consequence, the notions of principal eigenvalue and of asymptotic profile are only relevant for applications when the convergence in \eqref{e:paradigm}  occurs fast enough.

Spectral theory for semigroups and generators yields a well-established and classical framework for establishing the validity of  \eqref{e:paradigm}, again provided that the growth and fragmentation rates are properly chosen. In short, if one can find positive eigenelements, namely a Radon measure $\nu$ and a positive function
$h$ on $(0,\infty)$, such that for some $\rho\in \RR$:
$${\mathcal A}h=\rho h\ ,\ {\mathcal A}^*\nu= \rho \nu, \ \text{ and } \ip{\nu}{h}=1,$$
where ${\mathcal A}^*$ denotes the dual of ${\mathcal A}$, then the so-called general relative entropy method (see in particular Chapter 6 in Perthame \cite{Per07} and  \citet{MMP05}) shows that 
\eqref{e:paradigm} holds.  
In turn, explicit criteria in terms of the rates of growth $c$ and of fragmentation $k$  that ensure the existence of positive eigenelements, have  been obtained by Michel \cite{Mic06} and by Doumic and Gabriel \cite{DoumGab}. These works rely crucially on  the Krein-Rutman theorem, a version of the Perron-Frobenius theorem for positive compact operators.
On the other hand, exponential rate of convergence in \eqref{e:paradigm} is essentially  equivalent to  the existence of a spectral gap. This has been obtained under specific assumptions on the growth and fragmentation rates notably by 
\citet{PertRyz}, \citet{LauPert}, \citet{CCM11} and \citet{MS16}.

Quite recently, together with A.R. Watson \cite{BW}, we devised  a probabilistic approach to  \eqref{e:paradigm}, which circumvents spectral theory of semigroups and further provides probabilistic expressions for the various quantities of interest.
This requires some assumptions on the growth rate $c$ and the fragmentation $k$ that we now introduce.
First,
\begin{equation}\label{e:c-bound}
 \text{ the function $x\mapsto c(x)/x$ is continuous, positive and bounded on $(0,\infty)$,}
\end{equation}
and second, writing $\bar k(x,y)\coloneqq x^{-1}yk (x,y)$ for every $0<y<x$, 
\begin{equation}\label{H8}
\text{the map $ x\mapsto \bar k(x,\cdot)$ from $(0,\infty)$ to $L^1(\dd y)$ is continuous and bounded.}
\end{equation}

Our probabilistic  approach relies on an instrumental Markov process $X=(X_t)_{ t\geq 0}$ with infinitesimal generator
\begin{equation}\label{e:genMark}
{\mathcal G}f(x) \coloneqq c(x)f'(x)+ \int_0^x(f(y)-f(x)) \bar k(x,y) \, \dd y. 
\end{equation}
Assumption \eqref{H8} guaranties that the total jump rate \eqref{e:cl} remains bounded, so the jump times of $X$ never accumulate. One says  $X$ is piecewise deterministic (see \cite{refId0} and references therein), in the sense that 
the trajectory $t\mapsto X_t$ is driven by the steady flow velocity $c$ between two consecutive jumps, and jump times and locations are the sole source of randomness. We finally assume  that
\begin{equation}\label{e:irred}
  \text{the Markov process $X$ is irreducible,}
\end{equation}
that is, for every  $x,y>0$, the probability that the Markov process
started from $x$ visits $y>0$ is strictly positive. Roughly speaking, this means that there are no strict subintervals $I$ of $(0,\infty)$ that form traps for $X$, in the sense that once the path enters $I$, it cannot exit from $I$. 
Because $X$ is piecewise deterministic and has only downwards jumps, 
this can be ensured by a simple non-degeneracy assumption on the
fragmentation kernel $k$; see the forthcoming Lemma \ref{L8} and its proof for details.

The growth-fragmentation semigroup $T_t$
can then be given by a Feynman-Kac formula (we refer to \cite{Delmoral} for treatise on this topic in discrete time): \begin{equation}\label{e:fk}
  T_tf(x) = x \EE_x\left( \mathcal{E}_t \frac{ f(X_t)}{X_t}\right), \qquad \text{with }   \mathcal{E}_t\coloneqq \exp\left(\int_0^t \frac{c(X_s)}{X_s}  \dd s \right),
\end{equation}
where
$\EE_x$ stands for the 
expectation when $X$ starts at $X_0 = x$.
The first hitting time of $y>0$ by $X$, 
\begin{equation}\label{e:defH}
H(y)\coloneqq \inf\left\{ t>0: X_t=y\right\},
\end{equation}
and the Laplace transform
\begin{equation}\label{e:defL}
  L_{x,y}(q)
  \coloneqq
  \EE_x\left( \e^{-q H(y)}{\mathcal E}_{H(y)} , H(y)<\infty\right),
  \qquad q\in\RR,
\end{equation}
 then play a key role for the asymptotic behavior of $T_t$ as we shall now explain.
 
  Note that $L_{x,y}$ is always a non-increasing convex function with values in $(0,\infty]$, with $\lim_{q\to \infty} L_{x,y}(q)= 0$ and $\lim_{q\to -\infty} L_{x,y}(q)= \infty$. In particular, it possesses a right-derivative 
 $L'_{x,y}(q)$ at every point $q$ of its effective domain, i.e. with $L_{x,y}(q)<\infty$. Defining the Malthus exponent by
 \begin{equation} \label{e:lambda}
    \lambda\coloneqq \inf\{q\in\RR: L_{x,x}(q) < 1\}
\end{equation}
  (actually, this definition does not depend on $x>0$),
 the main results of  \cite{BW} can be summarized as follows. First,  if 
 \begin{equation}\label{e:condBW}
  L_{x,x}(\lambda)=1\ \text{ and }\  -L'_{x,x}(\lambda)<\infty
  \end{equation}
(again, this condition does not depend on $x$),  then  \eqref{e:paradigm} holds with $\rho=\lambda$. Moreover, the asymptotic profile $\nu$ and the function $h$ are given for some arbitrarily chosen  $x_0>0$  by
\begin{equation}\label{e:profile}
h (y) = y L_{y,x_0}(\lambda) \ \text{ and } \ 
\nu(\dd y)=  \frac{\dd y}{h(y) c(y) |L'_{y,y}(\lambda)|} , \qquad y>0.
\end{equation}
Second, if  
\begin{equation}\label{e:condBW2}
\text{there exists some }q<\lambda \text{ and  } x>0 \text{ with } L_{x,x}(q)<\infty,
  \end{equation}
  then  the convergence  \eqref{e:paradigm} takes place exponentially fast. Specifically,
  there exists $\beta >0$ such that 
\begin{equation}\label{e:expconv}
  \e^{-\lambda t} T_tf(x)=h(x)\ip{\nu}{f}  + o( \e^{-\beta t} ) \qquad \text{ as }t\to \infty
 \end{equation}
 for every continuous function $f$ with compact support and every $x>0$.
We stress that, by convexity of $L_{x,x}$, \eqref{e:condBW2} is of course a stronger requirement than \eqref{e:condBW}. 

Throughout the rest of this work, we always assume that \eqref{e:c-bound}, \eqref{H8} and \eqref{e:irred} hold.
We have two main purposes. First, we shall observe that the condition \eqref{e:condBW} is also necessary for the Malthusian behavior \eqref{e:paradigm}, and in particular, whenever the latter holds, the principal eigenvalue $\rho$  is always given by 
the Malthus exponent defined by \eqref{e:lambda}. We shall actually establish an even slightly stronger result.

\begin{theorem}\label{T1} 
Suppose that for some $\rho\in \RR$:
\begin{enumerate}
\item there exist $x_1>0$ and a continuous function $f:(0,\infty)\to \RR_+$ with compact support and $f\not\equiv 0$, such that
$$\limsup_{t\to \infty} \e^{-\rho t}T_tf(x_1)<\infty,$$

\item there exist $x_2>0$ and a continuous function $g:(0,\infty)\to \RR_+$ with compact support, such that
$$\liminf_{t\to \infty}\e^{-\rho t} T_tg(x_2)>0.$$
\end{enumerate}
Then $\rho = \lambda$, \eqref{e:condBW} holds and thus also the Malthusian behavior \eqref{e:paradigm}
with \eqref{e:profile}.
\end{theorem}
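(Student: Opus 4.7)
The plan is to extract, for an arbitrary reference point $x_0>0$ and any non-negative continuous compactly supported $f$, a scalar renewal equation governing $\phi(t)\coloneqq \e^{-\rho t}T_tf(x_0)$, and to then read off both halves of~\eqref{e:condBW} by analysing this equation with the help of~(i) and~(ii). Applying the strong Markov property at the first return time $H_1\coloneqq H(x_0)$ in the Feynman--Kac representation~\eqref{e:fk} gives
\begin{equation*}
  \phi(t)=\alpha_f(t)+\int_0^t \phi(t-s)\,M(\dd s),
\end{equation*}
where $M(\dd s)\coloneqq \EE_{x_0}(\e^{-\rho H_1}{\mathcal E}_{H_1};H_1\in \dd s)$ is a positive measure of total mass $L_{x_0,x_0}(\rho)=\ell(\rho)$ (which is base-point independent), and $\alpha_f(t)\coloneqq \e^{-\rho t}x_0\EE_{x_0}({\mathcal E}_tf(X_t)/X_t;H_1>t)\geq 0$ is the ``first excursion'' initial term. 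Taking Laplace transforms yields the algebraic identity
\begin{equation*}
  \hat\phi(\xi)\bigl(1-\ell(\rho+\xi)\bigr)=\hat\alpha_f(\xi),
\end{equation*}
valid wherever both sides are finite.

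Applied with $x_0=x_1$ and $f$ as in~(i), the assumed $\limsup$ bound gives $\hat\phi(\xi)\leq \mathrm{const}/\xi$ for every $\xi>0$. Suppose for contradiction that $\ell(\rho)>1$; by continuity of $\ell$ on the interior of its effective domain, $\ell(\rho+\xi)>1$ on some open interval $(0,\xi^*)$. On this interval the Laplace identity has the form $\hat\phi(\xi)\cdot(\text{strictly negative})=\hat\alpha_f(\xi)\geq 0$, forcing $\hat\phi(\xi)\equiv 0$ and hence $\phi\equiv 0$ by Laplace uniqueness. But then the Feynman--Kac formula forces $f(X_t)=0$ almost surely for every $t>0$ under $\PP_{x_1}$, and irreducibility of $X$ (which guarantees that $X$ enters every non-empty open subset of $(0,\infty)$ with positive probability at some positive time) forces $f\equiv 0$, contradicting $f\not\equiv 0$. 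Hence $\ell(\rho)\leq 1$.

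Now apply the setup with $x_0=x_2$ and $g$ as in~(ii). If $\ell(\rho)<1$ then $M$ is a strict sub--probability measure; iterating the renewal equation to $\phi=\sum_{n\geq 0}M^{*n}*\alpha_g$ and using that $\alpha_g$ is bounded with $\alpha_g(t)\to 0$ as $t\to \infty$ (which can be deduced from $\PP_{x_2}(H_1>t)\to 0$ together with the piecewise-deterministic structure of $X$), dominated convergence yields $\phi(t)\to 0$, contradicting $\liminf\phi>0$. Thus $\ell(\rho)=1$; by strict monotonicity of $\ell$ on its effective domain this means $\rho=\lambda$ and $\ell(\lambda)=1$. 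Then $M$ is a bona fide probability measure on $(0,\infty)$ with mean $m=-\ell'(\lambda)\in(0,\infty]$, and the key renewal theorem delivers $\phi(t)\to \int_0^\infty \alpha_g(s)\,\dd s / m$; a strictly positive $\liminf$ thus forces $m<\infty$, which is the second half of~\eqref{e:condBW}. Invoking the theorem of~\cite{BW} then concludes the Malthusian behaviour with profile~\eqref{e:profile}.

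The main obstacle I expect is the regularity input needed to apply the renewal theorem rigorously, namely direct Riemann integrability (or at minimum boundedness plus decay to zero) of $\alpha_g$, and non-arithmeticity of $M$. Both points rest on a careful study of the Feynman--Kac weight ${\mathcal E}_t$ along a single excursion away from $x_0$, using~\eqref{e:c-bound}--\eqref{H8} and the continuous flow component of $X$ between jumps. A secondary technicality is the degenerate case $\ell(\rho)=\infty$ in the first step, which one would handle by working with the right-limit of $\ell$ at the boundary of its effective domain rather than with $\ell(\rho)$ itself.
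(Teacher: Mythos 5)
Your proposal takes a genuinely different route from the paper: you work directly with the scalar observable $\phi(t)=\e^{-\rho t}T_tf(x_0)$ and a classical renewal equation at first-return times, whereas the paper constructs supermartingales $\mathcal{S}^{(q)}_t=\e^{-qt}\ell_q(X_t)\mathcal{E}_t$ for $q\geq\lambda$, uses them to $h$-transform $X$ into a possibly defective process $Y^{(\rho)}$, and then argues from positive recurrence of $Y^{(\rho)}$. Your first step (deducing $L_{x_1,x_1}(\rho)\leq 1$, hence $\rho\geq\lambda$, from the identity $\hat\phi(\xi)(1-\ell(\rho+\xi))=\hat\alpha_f(\xi)$ and irreducibility) is correct and amounts to a self-contained re-derivation of Proposition 3.3 of \cite{BW}, which the paper simply invokes in Lemma~\ref{L3}(i). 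Your elimination of the case $\ell(\rho)<1$ also reaches the right conclusion, but the justification you offer for $\alpha_g(t)\to 0$, namely ``$\PP_{x_2}(H_1>t)\to 0$ together with the piecewise-deterministic structure,'' is not sound: $\PP_{x_2}(H_1<\infty)=1$ is recurrence of $X$, which is not assumed, and even granted, it does not control the Feynman--Kac weight $\e^{-\rho t}\mathcal{E}_t$. The claim is true, but for a different reason: since $\ell(\rho)<1$ forces $\rho>\lambda$, you may pick $q\in(\lambda,\rho)$ with $L_{x_2,x_2}(q)<1<\infty$, and then Lemmas~\ref{L1} and~\ref{L2} of the paper yield $\alpha_g(t)\leq C\,\e^{-(\rho-q)t}\,\EE_{x_2}\bigl(\mathcal{S}^{(q)}_t\bigr)\leq C\,\ell_q(x_2)\,\e^{-(\rho-q)t}$.

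The serious gap is the final appeal to the key renewal theorem, which you flag as ``the main obstacle'' but leave unresolved. At the critical value $\rho=\lambda$ you need $\alpha_g$ to be directly Riemann integrable, and in particular $\int_0^\infty\alpha_g(t)\,\dd t<\infty$; this is not a routine technicality. That integral is essentially the mass of $\supp g$ under the pre-$H_1$, $\lambda$-tilted occupation measure of $X$, and its finiteness is tantamount to the positive recurrence that the paper establishes by a genuinely different mechanism (Lemma~\ref{L3}(ii), Corollary~\ref{C1}, then Lemma 5.2 of \cite{BW}). The Laplace identity gives no help at $\xi=0$ since $\ell(\lambda)=1$ makes both sides $0\cdot\infty$, and boundedness of $\alpha_g$ together with $\alpha_g(t)\to 0$ (both obtainable via $\mathcal{S}^{(\lambda)}$ once $L_{x_2,x_2}(\lambda)=1$ is known) is insufficient for the renewal theorem. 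So as written the argument does not reach $-L'_{x_2,x_2}(\lambda)<\infty$; you would need to extract quantitative tail control on $\alpha_g$ from hypothesis~(ii), which is exactly where your sketch stops.
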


The assumptions in \eqref{e:condBW} and \eqref{e:condBW2} are given in terms of the Laplace transform $L_{x,y}$
rather than directly in terms of the coefficients $c$ and $k$ as one might have wished, and the second purpose of the present work is to remedy (at least partly) this problem by providing the following simple criterion.

\begin{theorem}\label{T2} 
If the Malthus exponent $\lambda$ defined in \eqref{e:lambda}  and  the growth rate $c$ fulfill
\begin{equation}\label{e:ccbis}
 \limsup _{x\to 0+} \frac{c(x)}{x}< \lambda \quad\text{and}\quad  \limsup _{x\to \infty} \frac{c(x)}{x}<\lambda,
 \end{equation}
then the exponential convergence \eqref{e:expconv} holds. 
\end{theorem}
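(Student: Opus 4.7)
The plan is to reduce Theorem~\ref{T2} to the sufficient condition \eqref{e:condBW2} of the Bertoin--Watson result, so that the exponential convergence \eqref{e:expconv} follows directly. Hence I only need to exhibit some $q<\lambda$ and $x_0>0$ with $L_{x_0,x_0}(q)<\infty$. By \eqref{e:ccbis}, I pick $q$ satisfying $\max(\limsup_{x\to 0+}c(x)/x,\limsup_{x\to\infty}c(x)/x)<q<\lambda$; then there exist $0<a<b<\infty$ and $\delta>0$ with $c(x)/x-q\leq-\delta$ on $(0,a)\cup(b,\infty)$, while $c(x)/x-q\leq M$ on $[a,b]$ for some $M<\infty$. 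I shall take $x_0=a$, which is convenient because every time $X$ enters $(0,a)$ via a jump it can only return to $[a,b]$ by flowing continuously up to $a$, that is, to $x_0$.

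Splitting the exponent in $L_{x_0,x_0}(q)$ along the excursion of $X$ from $x_0$ according to whether $X_s\in[a,b]$ yields the pathwise bound
\[
\int_0^{H(x_0)}\bigl(c(X_s)/X_s-q\bigr)\,\dd s\leq(M+\delta)\tau_K-\delta H(x_0),
\]
with $\tau_K=\int_0^{H(x_0)}\Indic{X_s\in[a,b]}\,\dd s$. Therefore
\[
L_{x_0,x_0}(q)\leq \EE_{x_0}\bigl[\exp\bigl((M+\delta)\tau_K-\delta H(x_0)\bigr);\,H(x_0)<\infty\bigr].
\]
Heuristically this is finite: each single visit of $X$ to $[a,b]$ has exponentially decaying duration, controlled by the finite flow-time $T_0=\int_a^b\dd y/c(y)$ together with the uniformly bounded jump rate provided by \eqref{H8}, while long excursions outside $[a,b]$ are strongly damped by the factor $\e^{-\delta H(x_0)}$. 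The irreducibility \eqref{e:irred} provides, at each entry of $X$ into $[a,b]$, a positive uniform lower bound on the probability of hitting $x_0=a$ during the visit, so that the number of visits to $[a,b]$ during one excursion is stochastically dominated by a geometric random variable.

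The main obstacle is turning this heuristic into a rigorous exponential-moment estimate. The cleanest route I envisage is a Foster--Lyapunov drift inequality for the Feynman--Kac semigroup $Q^q_t g(x)=\EE_x[\exp(\int_0^t(c(X_s)/X_s-q)\,\dd s)\,g(X_t)]$, namely the construction of a function $V\geq 1$ with $(\mathcal{G}+c(x)/x-q)V(x)\leq -\eta V(x)+C\,\Indic{x\in[a,b]}$ for some $\eta,C>0$. Candidates exploiting both ends of \eqref{e:ccbis} are of the form $V(x)=x^{-\beta}+x^\gamma$ for suitable exponents; the drift inequality is straightforward on $(0,a)\cup(b,\infty)$ thanks to \eqref{e:ccbis}, but verifying it requires a delicate analysis of the fragmentation integral $\int_0^x V(y)\bar k(x,y)\,\dd y$ using only the mild continuity and boundedness property \eqref{H8}, and this is where the main technical work of the proof lies.
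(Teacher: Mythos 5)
Your reduction to verifying \eqref{e:condBW2} is exactly the paper's strategy, and your pathwise splitting of $\int_0^{H(x_0)}(c(X_s)/X_s-q)\,\dd s$ according to whether $X_s$ lies inside or outside a compact interval $[a,b]$ is the right heuristic: the paper's final argument likewise splits excursions at first-exit times and controls the middle piece by a compact-interval estimate and the outer pieces by the pointwise inequality $c(x)/x<q$. However, the Foster--Lyapunov route you propose for making this rigorous has a genuine gap that cannot be repaired under the standing hypotheses. The issue is the term $\int_0^x V(y)\bar k(x,y)\,\dd y$ with $V(y)=y^{-\beta}$. Assumption \eqref{H8} only asserts that $\bar k(x,\cdot)\in L^1(\dd y)$ with bounded norm; it provides no control whatsoever on moments of order $-\beta$ of $\bar k(x,\cdot)$. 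For example, if $\bar k(x,y)=\Ind_{\{y<x/\e\}}\,/\,\bigl(y(\log(x/y))^2\bigr)$, then $K(x)\equiv 1$ so \eqref{H8} holds, yet $\int_0^x(y/x)^{-\beta}\bar k(x,y)\,\dd y=\int_1^\infty u^{-2}\e^{\beta u}\,\dd u=\infty$ for \emph{every} $\beta>0$. So the drift inequality you want is not just ``delicate''; the generator $\mathcal{G}V$ is not even defined for your candidate Lyapunov function, and no choice of power exponent fixes this.

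The paper's proof circumvents this obstacle entirely by never choosing a Lyapunov function a priori. Instead it restricts to compact intervals $[a,b]$, uses equicontinuity (Lemma \ref{L4}) and irreducibility (Lemma \ref{L6}) to apply the Krein--Rutman theorem to a killed resolvent operator $U_{a,b}$, and extracts a principal eigenvalue $\rho_{a,b}$ together with a bounded positive eigenfunction $h_{a,b}$ on $[a,b)$ (Proposition \ref{P2}). These eigenfunctions produce genuine martingales (Lemma \ref{L7}), yield the identity $\EE_x[\mathcal{E}_{H(y)}\e^{-\rho_{a,b}H(y)},H(y)<\sigma(a,b)]=h_{a,b}(x)/h_{a,b}(y)$ (Proposition \ref{P1}), and a monotonicity argument shows $\rho_{a,b}\uparrow\lambda$ as the good interval $(a,b)$ exhausts $(0,\infty)$ (Lemma \ref{L9}). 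Condition \eqref{e:ccbis} then guarantees $\sup_{(0,a]\cup[b,\infty)}c(x)/x<\rho_{a,b}$ for a suitable good interval, and the excursions below $a$ and above $b$ are neutralized directly by the pointwise bound $\mathcal{E}_t\e^{-qt}\le1$ valid there, together with one jump-compensator estimate — no boundary-adapted Lyapunov function is ever constructed. That is precisely what rescues the argument from the bad kernels above, and it is why the Krein--Rutman detour, which your proposal does not contain, is the indispensable ingredient.
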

Theorem \ref{T2} might seem unsatisfactory, as its requirements are not given only in terms of the rates $c$ and $k$, but also involve the Malthus exponent $\lambda$. However, there are simple explicit conditions in terms of $c$ and $k$ only that ensure \eqref{e:ccbis}. In particular, it is easily seen that $\lambda> \inf_{x>0} c(x)/x$ when $X$ is recurrent and $c$ is not linear, cf. Proposition 3.4(ii) in \cite{BW}. Thus \eqref{e:ccbis} is then fulfilled whenever
\begin{equation}\label{e:ccter} \lim_{x\to 0+} \frac{c(x)}{x}=\lim_{x\to \infty} \frac{c(x)}{x}= \inf_{x>0} \frac{c(x)}{x}.
\end{equation}
In turn, explicit conditions in terms of $c$ and $k$ guarantying recurrence for $X$ are easy to obtain, as we shall further discuss in Section \ref{s:MC}(i-ii). This yields explicit criteria for \eqref{e:expconv} that enables us to treat cases than were not covered previously in the literature. 

It may be interesting to discuss a bit further Criterion \eqref{e:ccbis}. Requesting an upper-bound for the growth rate at infinity should not come as a surprise as similar assumptions are made in the literature to prevent the formation of too large particles. For instance, \citet{DoumGab} request (among other assumptions) that $\lim_{x\to \infty} xK(x)/c(x)=\infty$, which forces in our setting $\lim_{x\to \infty} c(x)/x=0$ since we also assumed in \eqref{H8} that the total rate of fragmentation $K$ remains bounded; see Equation (13) in \cite{DoumGab}, and also Equation (11) in \citet{BCG-fine}. On the other hand, imposing an upper-bound for the growth rate at $0+$ may be more surprising, as on the contrary, it is often assumed in the literature that the growth for small particles should be strong enough in order to prevent shattering (see notably Equation (11) in \cite{DoumGab} and Equation (10) in \cite{BCG-fine}). One might be further puzzled by the fact that the fragmentation rate $k$ does not appear explicitly in \eqref{e:ccbis}; however, the value of the Malthus exponent $\lambda$ depends of course both on $c$ and $k$. 

Let us also try to offer a rather informal interpretation of \eqref{e:ccbis}. The Feynman-Kac formula provides a representation of  the growth-fragmentation semigroup $(T_t)_{t\geq 0}$ in terms of a weighted particle $(X_t, {\mathcal E}_t)$, where $X_t$ is the location of the particle at time $t$ and ${\mathcal E}_t$ its weight. The weight thus increases at rate $c(x)/x$ when the particle is located at $x$, and 
in this setting, the Malthus exponent $\lambda$ can be interpreted as the long-time average rate of increase of the weight. Then \eqref{e:ccbis} means that the weight of the particle increases more slowly than on average when the particle is either close to $0$ or close to $\infty$. Informally, the particle has thus a more important contribution to the Feynman-Kac formula when it stays away from $0$ and from $\infty$, that is essentially when it remains confined in a compact interval. And it is precisely for processes staying in compact spaces that exponential ergodicity is expected. 

The rest of this article is organized as follows. The two theorems are established in the next two sections, where the main ideas of the proofs are sketched first. We also gather in Section \ref{s:MC} miscellaneous comments about Theorem \ref{T2}, notably discussing further the connection with earlier results in the literature. 

 We conclude this introduction by recalling that the Feynman-Kac functional ${\mathcal E}$ defined in \eqref{e:fk} is multiplicative, in the sense that for every $s,t\geq 0$, there is the identity
$${\mathcal E}_{t+s} = {\mathcal E}_t \times ( {\mathcal E}_s\circ \theta_t),$$
where ${\mathcal E}_s\circ \theta_t$ stands for the functional ${\mathcal E}_s$ evaluated for the shifted path $X\circ \theta_t=X_{t+\cdot}$. In the sequel, this basic property will be often used without specific mentions, notably in combination with the Markov property.

\section{Proof of Theorem \ref{T1}}\label{s:PT1}

The arguments for proving Theorem \ref{T1} belong to the same vein as in \cite{BW}, with the difference that the role of remarkable martingales there is rather played here by supermartingales. Specifically, we shall first establish some properties of the first hitting time $H(y)$ and of its Laplace transform $L_{x,y}$, which are then applied to introduce  supermartingales related to the Feynman-Kac formula. Then we shall use the latter and introduce another one-parameter family of (possibly defective) Markov process $Y^{(q)}$ by   probability tilting. This yields a more direct probabilistic representation the growth-fragmentation semigroup, and analyzing the behavior of $Y^{(q)}$ via the regeneration property at return times then readily yields the conclusion. 

We start by considering the motion $t\mapsto x(t)$ of a Lagrangian particle in the steady flow velocity  $c$, viz.
$$\dd x(t)=c(x(t))\dd t,$$
which governs the dynamics of the piecewise deterministic process $X$ between consecutive jump times, and introduce some notation in this setting that will be useful in several parts of this work. For $0<x<y$, denote by $s(x,y)$ the travel time from $x$ to $y$, that is 
$$x(s(x,y))=y \qquad \text{ when } x(0)=x.$$
Obviously $s(\cdot, \cdot)$ decreases in the first variable and increases in the second one.
Consider also the  event $\Lambda_{x,y}$ that process $X$ started at $x$ reaches $y$ before making any jump. Since  $K(z)$ is the total jump rate when the process is located at $z$, we have
\begin{equation} \label{e:nojump}
p(x,y)\coloneqq \PP_x(\Lambda_{x,y}) = \exp\biggl(-\int_0^{s(x,y)} K(x(t))\dd t \biggr)=  \exp\biggl(-\int_x^y\frac{K(z)}{c(z)}\dd z\biggr).
\end{equation}
 This is a positive quantity which increases with $x$ and decreases with $y$.

We proceed with the following uniform lower-bound for the cumulative distribution functions of first hitting times \eqref{e:defH}.

\begin{lemma}\label{L-1} For every $0<a<b$, there exists $t(a,b)\in\RR_+$ such that
$$\inf_{x,y\in[a,b]} \PP_x(H(y)<t(a,b))>0.$$
\end{lemma}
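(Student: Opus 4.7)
My strategy splits into two parts: a simple reduction of the general hitting problem to that of hitting a single fixed reference point below $a$, and the technically more delicate proof of a uniform lower bound for that reduced problem.

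Fix an auxiliary point $y_*\in(0,a)$ once and for all, and set $T_3:=s(y_*,b)$ and $\beta:=p(y_*,b)$, both finite and strictly positive thanks to \eqref{e:c-bound} and \eqref{H8}. On the event $\{H(y_*)<\infty\}$ one has $X_{H(y_*)}=y_*$ by the definition \eqref{e:defH}, and starting afresh from $y_*$ the process reaches any $y\in[a,b]$ by flowing upward without jumps in time $s(y_*,y)\le T_3$ with probability $p(y_*,y)\ge \beta$. Consequently, by the strong Markov property applied at $H(y_*)$, for all $x,y\in[a,b]$,
\[
  \PP_x(H(y)\le T_2+T_3)\ \ge\ \PP_x(H(y_*)\le T_2)\cdot \beta.
\]
It therefore suffices to exhibit $T_2<\infty$ and $\delta_2>0$ with $\inf_{x\in[a,b]}\PP_x(H(y_*)\le T_2)\ge \delta_2$, after which $t(a,b):=T_2+T_3$ works.

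By the irreducibility assumption \eqref{e:irred}, for each $x\in(0,\infty)$ one has $\PP_x(H(y_*)<\infty)>0$, so by monotone convergence there exist $T_x<\infty$ and $\delta_x>0$ with $\PP_x(H(y_*)\le T_x)\ge 2\delta_x$. I would then promote this pointwise lower bound to a uniform one by showing that, for each fixed $T>0$, the map $x\mapsto \PP_x(H(y_*)\le T)$ is lower semicontinuous on $(0,\infty)$, and then extracting a finite subcover of $[a,b]$ by open neighbourhoods $V_i$ on each of which $\inf_{z\in V_i}\PP_z(H(y_*)\le T_{x_i})\ge \delta_{x_i}$.

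The main obstacle is establishing this lower semicontinuity. I would proceed by a pathwise coupling of two copies of $X$ started from nearby points $x$ and $x'$, driven by a common Poisson clock of rate $\sup_{u>0}K(u)<\infty$ (finite by \eqref{H8}) together with common uniform variables for the thinning and for the sampling of jump destinations via $\bar k$. The continuity of $c$, $K$ and $x\mapsto \bar k(x,\cdot)$ in the $L^1$ sense ensures that the two coupled trajectories stay close on $[0,T]$; moreover, since $\bar k(u,\cdot)$ has no atom at $y_*$, the event $\{H(y_*)\le T\}$ a.s.\ occurs via the process flowing up through $y_*$ after an earlier jump strictly below $y_*$, and this feature is preserved in the limit $x'\to x$ up to an arbitrarily small time loss. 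This should yield $\liminf_{x'\to x}\PP_{x'}(H(y_*)\le T+\varepsilon)\ge \PP_x(H(y_*)\le T)$ for every $\varepsilon>0$, and compactness of $[a,b]$ then delivers the required uniform bound.
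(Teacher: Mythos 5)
Your reduction through the auxiliary point $y_*$ and the final flow-up step are sound (the paper does the same thing with $a$ in place of $y_*$, using that $p(y_*,y)\ge p(y_*,b)$ and $s(y_*,y)\le s(y_*,b)$), but the heart of the lemma --- the uniform bound $\inf_{x\in[a,b]}\PP_x(H(y_*)\le T_2)\ge\delta_2>0$ --- is precisely what you leave unproven. The lower-semicontinuity-plus-compactness route is only sketched, and under the standing hypotheses it is genuinely delicate: \eqref{H8} gives continuity of $x\mapsto\bar k(x,\cdot)$ only in $L^1$, so a common-uniform (inverse-CDF) coupling does not keep the jump destinations of the two copies pathwise close in any straightforward way, and whatever discrepancy is created at one jump propagates and compounds through the subsequent flow and later jumps, so ``the two coupled trajectories stay close on $[0,T]$'' is itself a nontrivial quantitative claim. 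Moreover $\{H(y_*)\le T\}$ is an exact-hitting event: even granted uniform closeness of trajectories you still need the transversality argument (the crossing of level $y_*$ happens with speed $c(y_*)>0$, and one must rule out a common clock point in a realization-dependent small window after the crossing), a realization-dependent choice of ``$x'$ close enough'', and a Fatou argument to pass to the limit. None of this is carried out, and completing it would be considerably longer and harder than the lemma itself.

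The missing observation that makes all of this machinery unnecessary is that uniformity in the starting point comes for free from the structure of $X$: since the flow is strictly upward and jumps are only downward, from any $x\in[a,b]$ the process reaches the \emph{fixed} point $b$ before its first jump with probability $p(x,b)\ge p(a,b)>0$ and within time $s(x,b)\le s(a,b)$, by \eqref{e:nojump}. Hence irreducibility \eqref{e:irred} needs to be invoked only once, at the single state $b$: pick $r(a,b)>0$ and $q(a,b)\in(0,1)$ with $\PP_b(H(a)<r(a,b))>q(a,b)$, and then for all $x,y\in[a,b]$ the strong Markov property gives $\PP_x\bigl(H(y)<2s(a,b)+r(a,b)\bigr)\ge p(a,b)^2q(a,b)>0$, which is the paper's proof. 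Your two-stage structure (reach a fixed low reference point, then flow up to $y$ without jumping) is the right second half; replacing your coupling/semicontinuity step by this ``flow up to $b$ without jumping, then apply irreducibility at $b$'' step closes the gap.
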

\begin{proof}    
  Consider first the process $X$ started from $b$.
The irreducibility assumption \eqref{e:irred} ensures that we can find two real numbers $q(a,b)\in(0,1)$ and $r(a,b)>0$
such that
$$\PP_b(H(a)<r(a,b))>q(a,b).$$

Next consider the process $X$ started from an arbitrary point $x\in [a,b]$. By focusing on trajectories which first hit $b$ before having any jump, then need an amount of time less than $r(a,b)$ for traveling from $b$ to $a$, and finally  hit $y\in[a,b]$ before having any further jump, we deduce from an application of the strong Markov property that there is the lowerbound
$$ \PP_x(H(y)<2s(a,b)+ r(a,b))> p(a,b)^2q(a,b)>0.$$
This proves our claim with $t(a,b)=2s(a,b)+ r(a,b)$.
\end{proof}

Next, recall the notation \eqref{e:defL} for the Laplace transform $L_{x,y}$,  \eqref{e:lambda} for the Malthus exponent, and fix $x_1>0$ arbitrarily.

\begin{lemma} \label{L1}
For every $q\geq \lambda$,  the function
$${\ell}_q: x\mapsto   L_{x,x_1}( q ),
  \qquad x>0,
$$
is  bounded away from $0$ and from $\infty$ on every compact interval of $(0,\infty)$.
\end{lemma}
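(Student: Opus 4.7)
My plan is to derive the upper bound via an excursion identity at $x_1$ combined with the defining inequality $L_{x_1,x_1}(q)\leq 1$ for $q\geq\lambda$, and to obtain the lower bound directly from Lemma~\ref{L-1}. Throughout, I fix a compact interval $[a,b]\subset(0,\infty)$, which I may enlarge so that $x_1\in[a,b]$.

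\textbf{Lower bound.} Lemma~\ref{L-1} produces $t_*>0$ and $p_*>0$ such that $\PP_x(H(x_1)<t_*)\geq p_*$ for every $x\in[a,b]$. Since $c>0$ forces $\mathcal{E}_t\geq 1$, and $e^{-qH(x_1)}\geq e^{-(q\vee 0)t_*}$ on $\{H(x_1)<t_*\}$, one obtains $\ell_q(x)\geq p_*\,e^{-(q\vee 0)t_*}$ uniformly in $x\in[a,b]$.

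\textbf{Upper bound.} For $x\in[a,b]\setminus\{x_1\}$, introduce the avoided-return Laplace transform
$$L_{x_1,x}^{<x_1}(q)\coloneqq \EE_{x_1}\bigl[e^{-qH(x)}\mathcal{E}_{H(x)},\,H(x)<H(x_1)\bigr].$$
Decomposing $\{H(x_1)<\infty\}$ under $\PP_{x_1}$ according to whether $X$ visits $x$ before returning to $x_1$, applying the strong Markov property at $H(x)$, and using the multiplicativity of $\mathcal{E}$ yields
$$L_{x_1,x_1}(q)\;=\;L_{x_1,x_1}^{<x}(q)\;+\;L_{x_1,x}^{<x_1}(q)\,\ell_q(x),$$
and hence the one-sided bound $L_{x_1,x_1}(q)\geq L_{x_1,x}^{<x_1}(q)\,\ell_q(x)$. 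The definition \eqref{e:lambda} of the Malthus exponent combined with monotone convergence as $q\downarrow\lambda$ gives $L_{x_1,x_1}(q)\leq 1$ for every $q\geq\lambda$, so
$$\ell_q(x)\;\leq\;\frac{1}{L_{x_1,x}^{<x_1}(q)}.$$
(The value $\ell_q(x_1)=L_{x_1,x_1}(q)\leq 1$ handles the case $x=x_1$ directly.)

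It remains to bound $L_{x_1,x}^{<x_1}(q)$ from below, uniformly on $[a,b]\setminus\{x_1\}$, by exhibiting concrete paths from $x_1$ to $x$ that avoid $x_1$. For $x>x_1$, the deterministic flow carries $X$ from $x_1$ to $x$ in time $s(x_1,x)$ while staying strictly above $x_1$; with probability $p(x_1,x)$ from \eqref{e:nojump} no jump occurs en route, yielding $L_{x_1,x}^{<x_1}(q)\geq p(x_1,x)\,e^{-(q\vee 0)s(x_1,x)}$, which is uniformly positive on $[x_1,b]$ by continuity. For $x<x_1$, an analogous two-step construction works: flow upward a fixed amount past $x_1$ to a level $z_0$, then jump into a neighbourhood of $x$ contained in $(0,x_1)$ (using the regularity of $\bar k$ from \eqref{H8} together with the irreducibility assumption \eqref{e:irred} to obtain positive jump density there), then flow continuously from the landing site up to $x$; by construction the path remains bounded away from $x_1$ after the jump, so its contribution lies in $L_{x_1,x}^{<x_1}(q)$.

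\textbf{Main obstacle.} The cleanest step is the excursion identity, which reduces the upper bound to uniform positivity of $L_{x_1,x}^{<x_1}(q)$. The technical work is the avoiding-path construction for $x<x_1$: one must respect the constraint $H(x)<H(x_1)$, so the irreducibility assumption alone does not suffice, and a careful quantitative use of the positivity of the jump kernel below any prescribed level is required to secure uniformity in $x$ on compact subintervals of $(0,x_1)$.
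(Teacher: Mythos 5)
Your lower bound is exactly the paper's argument: Lemma~\ref{L-1} together with $\mathcal{E}_t\geq 1$ gives a uniform bound $\ell_q(x)\geq p\,e^{-(q\vee 0)t(a,b)}$ on $[a,b]$. That part is fine.

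For the upper bound you take a genuinely different route, and it has a gap. You decompose the return to $x_1$ by whether $X$ visits $x$ first, arriving at $\ell_q(x)\leq 1/L_{x_1,x}^{<x_1}(q)$, and then you must bound $L_{x_1,x}^{<x_1}(q)$ away from $0$ uniformly in $x\in[a,b]$. That replaces the free hitting probability $L_{x_1,x}(q)$ by the strictly smaller quantity in which the excursion must avoid $x_1$, and this constrained quantity is exactly what you cannot control with the tools at hand. Your argument for $x<x_1$ asserts that from some $z_0>x_1$ the jump density $\bar k(z_0,\cdot)$ is positive on a neighbourhood of $x$, invoking \eqref{H8} and \eqref{e:irred}. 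Neither assumption supports this: \eqref{H8} is only $L^1$-continuity in the first argument, and irreducibility \eqref{e:irred} only guarantees that $x$ can eventually be reached, possibly after many jumps (with the process potentially returning to $x_1$ in between), not that a single jump from above $x_1$ can land near $x$. A multi-jump avoiding-path construction could in principle be attempted, but establishing a \emph{uniform} lower bound on $L_{x_1,x}^{<x_1}(q)$ over a compact interval is substantial extra work, not a finishing detail. You flag this yourself as ``the technical work'', but the proof as written does not close it.

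The paper sidesteps the whole issue. It invokes Equation (16) of \cite{BW}, which for any $q'>\lambda$ gives $L_{x,x_1}(q')\,L_{x_1,x}(q')<1$ (this is the clean submultiplicative estimate, and note it involves $L_{x_1,x}$, not the constrained $L_{x_1,x}^{<x_1}$). Right-continuity in $q$ then yields $\ell_q(x)\leq 1/L_{x_1,x}(q)$, and $L_{x_1,x}(q)$ is bounded away from $0$ on $[a,b]$ by the \emph{same} application of Lemma~\ref{L-1} you already used for the lower bound, with no constraint of avoiding $x_1$. So the intended argument is both shorter and avoids precisely the obstacle that your excursion decomposition introduces. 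To repair your proof you would either need to prove the uniform positivity of $L_{x_1,x}^{<x_1}(q)$ on compacts (a nontrivial path-construction argument using only the irreducibility of the killed process), or, more simply, replace the excursion identity by the submultiplicative bound from \cite{BW} and proceed as in the paper.
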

\begin{proof}
Let us assume that $ q >0$, the case when $ q \leq 0$ being somewhat simpler. We have  plainly
$${\ell}_q(x)\geq v_q(x,x_1)\coloneqq  
 \EE_x\left( \e^{- q  H(x_1)}, H(x_1)<\infty\right).
$$
Fix $a>0$ arbitrarily small and $b>0$ arbitrarily large, with $a<x_1<b$. 
Lemma \ref{L-1} ensures the existence of $t(a,b)>0$ and $p>0$ such that, for all $x,y\in[a,b]$, 
$$v_q(x,y) \geq  \EE_x\left( \e^{- q  H(y)}, H(y)<t(a,b)\right) \geq p \e^{-  q  t(a,b)}.$$
 {\it A fortiori},  $\inf_{a\leq x \leq b} {\ell}_q(x)>0$.

On the other hand,  our assumption $q\geq \lambda$ and the very definition \eqref{e:lambda} entail that $L_{x_1,x_1}(q')<1$ for every $q'> q $. Equation (16) in \cite{BW} states that then
$L_{x,x_1}(q') L_{x_1,x}(q') <1$ for every $x>0$. By right-continuity of the functions $L_{x,y}$, we have 
$${\ell}_q(x)=L_{x,x_1}( q ) \leq 1/L_{x_1,x}( q ),$$
and  since 
 $$L_{x_1,x}( q )\geq v_q(x_1,x)\geq p \e^{-  q  t(a,b)},$$
 we conclude that $\sup_{a\leq x \leq b} {\ell}_q(x)<\infty$.
\end{proof}
 
Theorem 4.4  in \cite{BW}, which states that if  $L_{x,x}(\lambda)=1$, then the process $\left( \e^{-\lambda t}{\ell}_{\lambda}(X_t) {\mathcal E}_t\right)_{t\geq 0}$ is a martingale, is a cornerstone of the probabilistic approach which is developed there.  
Here is a version of the latter in terms supermartingales.

\begin{lemma} \label{L2} For every $q\geq \lambda$, 
 the process 
$${\mathcal S}^{(q)}_t\coloneqq \e^{- q  t}{\ell}_q(X_t) {\mathcal E}_t, \qquad t\geq 0$$
is a $\PP_x$-supermartingale for every $x>0$. 
\end{lemma}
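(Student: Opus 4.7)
The plan is to reduce the supermartingale property to a pointwise inequality via the Markov property, and then verify that inequality by analyzing successive returns of $X$ to $x_1$ as a discrete-time non-negative supermartingale, to which optional stopping applies. This is the natural analogue, with inequalities replacing equalities, of the martingale argument behind Theorem~4.4 of \cite{BW}.

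By the Markov property at time $s\leq t$ combined with the multiplicativity of $\mathcal{E}$ recalled at the end of the introduction, the inequality $\EE_x({\mathcal S}^{(q)}_t\mid {\mathcal F}_s)\leq {\mathcal S}^{(q)}_s$ reduces to showing that, for every $y>0$ and $u\geq 0$,
$$\EE_y\bigl(\e^{-qu}{\mathcal E}_u {\ell}_q(X_u)\bigr)\leq {\ell}_q(y).$$
Unfolding ${\ell}_q(X_u)=\EE_{X_u}(\e^{-qH(x_1)}{\mathcal E}_{H(x_1)}\Ind_{H(x_1)<\infty})$ and applying the Markov property at time $u$, the left-hand side rewrites as $\EE_y(\e^{-q\sigma_u}{\mathcal E}_{\sigma_u}\Ind_{\sigma_u<\infty})$, where $\sigma_u\coloneqq \inf\{t>u\colon X_t=x_1\}$ is the first visit of $X$ to $x_1$ strictly after time $u$. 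Splitting according to whether $H(x_1)>u$ (so that $\sigma_u = H(x_1)$) or $H(x_1)\leq u$ (invoking the strong Markov property at $H(x_1)$ in the second case), the desired inequality further reduces to the bound
$$\tilde h(v)\coloneqq \EE_{x_1}\bigl(\e^{-q\sigma_v}{\mathcal E}_{\sigma_v}\Ind_{\sigma_v<\infty}\bigr)\leq 1, \qquad v\geq 0.$$

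To establish this bound, let $0=T_0<T_1<T_2<\cdots$ denote the successive visits of $x_1$ by $X$ under $\PP_{x_1}$, with the convention $T_n=\infty$ once no further visit occurs. By iterated applications of the strong Markov property, the sequence $P_n\coloneqq \e^{-qT_n}{\mathcal E}_{T_n}\Ind_{T_n<\infty}$ satisfies $\EE_{x_1}(P_n\mid {\mathcal F}_{T_{n-1}})=P_{n-1}L_{x_1,x_1}(q)$, and the definition \eqref{e:lambda} of $\lambda$ combined with the monotonicity and right-continuity of $q\mapsto L_{x_1,x_1}(q)$ forces $L_{x_1,x_1}(q)\leq 1$ for every $q\geq \lambda$. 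Hence $(P_n)_{n\geq 0}$ is a non-negative discrete-time supermartingale with $P_0=1$. Writing $\sigma_v=T_N$ with the $({\mathcal F}_{T_n})$-stopping time $N\coloneqq \inf\{n\geq 1\colon T_n>v\}$, one obtains $\tilde h(v)=\EE_{x_1}(P_N\Ind_{N<\infty})$.

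Optional stopping applied to the non-negative supermartingale $(P_n)$ at the bounded times $N\wedge n$ yields $\EE_{x_1}(P_{N\wedge n})\leq P_0=1$ for every $n$, and Fatou's lemma as $n\to\infty$ gives $\tilde h(v)\leq 1$, completing the argument. The main subtlety I anticipate is the bookkeeping around the defective cases: the events $\{H(x_1)=\infty\}$ under $\PP_{x_1}$ and $\{N=\infty\}$ may have positive probability, but this is handled cleanly by the convention $P_n=0$ on $\{T_n=\infty\}$ together with the standard version of optional stopping for non-negative supermartingales.
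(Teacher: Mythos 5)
Your proof is correct and takes essentially the same approach as the paper: both arguments hinge on the discrete-time nonnegative supermartingale built from $L_{x_1,x_1}(q)\le 1$ at the successive return times to $x_1$, together with optional sampling. The only difference is organizational—you reduce first to a pointwise Markov-kernel inequality and establish $\tilde h\le 1$ by stopping at $N\wedge n$ and letting $n\to\infty$ via Fatou, while the paper writes ${\mathcal S}^{(q)}_s$ and ${\mathcal S}^{(q)}_t$ directly as conditional expectations of that discrete supermartingale sampled at $N_s+1\le N_t+1$ and invokes the tower property—but the underlying mechanism is identical.
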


\begin{proof}
 Write 
$$N_t\coloneqq \#\{0<s\leq t: X_s=x_1\},\qquad t\geq 0$$
for the process which counts the number of visits of $X$ to $x_1$ as time passes, and 
$$R_n\coloneqq \inf\{t>0: N_t=n\}, \qquad n\geq 1$$
for the instant when $X$ returns to $x_1$ for the $n$-th time.
Write also $({\mathcal F}_t)_{t\geq0}$ for the natural filtration of $X$ and recall that the return times $R_n$ are 
$({\mathcal F}_t)$-stopping times. Further, writing ${\mathcal G}_n\coloneqq {\mathcal F}_{R_n}$, we know that 
for every $t\geq 0$, $N_t+1$ is a $({\mathcal G}_n)$-stopping time and the first return to $x_1$ after time $t$ can be expressed as 
$$D_t\coloneqq \inf\{s>t: X_s=x_1\} = R_{N_t+1}. $$

On the one hand, we see from the Markov property at time $t$ and the definition of the function $\ell_q$ in Lemma \ref{L1}  that for every $x>0$, 
\begin{equation}\label{e:towerprep}
{\mathcal S}^{(q)}_t =\EE_x\left( \e^{- q  D_t} {\mathcal E}_{D_t}\Indic{D_t<\infty} \mid {\mathcal F}_t\right)= \EE_x\left( \e^{- q  {R_{N_t+1}}} {\mathcal E}_{R_{N_t+1}} \Indic{{R_{N_t+1}}<\infty} \mid {\mathcal F}_t\right).
\end{equation} 
On the other hand, the strong Markov property and the fact that $L_{x_1, x_1}( q )\leq 1$ (from the definition \eqref{e:lambda} and our assumption $q\geq \lambda$) entail that for every $x>0$,
$$\e^{- q  R_n} {\mathcal E}_{R_n}\Indic{R_n<\infty}\,, \qquad n\geq 1$$
is a $\PP_x$-supermartingale in the filtration $\left({\mathcal G}_n\right)_{n\geq 1}$.
Since for $s\leq t$, $N_s+1\leq N_t+1$ are two $({\mathcal G}_n)$-stopping times, it follows from the optional sampling theorem for nonnegative supermartingales that 
$$\EE_x\left( \e^{- q  {R_{N_t+1}}} {\mathcal E}_{R_{N_t+1}} \Indic{{R_{N_t+1}}<\infty} \mid {\mathcal G}_{N_s+1}\right)
\leq \e^{- q  {R_{N_s+1}}} {\mathcal E}_{R_{N_s+1}} \Indic{{R_{N_s+1}}<\infty}.
$$
Then on both sides,  take the conditional expectation given ${\mathcal F}_s$, which is a sub-algebra of ${\mathcal F}_{D_s}= {\mathcal G}_{N_s+1}$. We get from \eqref{e:towerprep} (applied at time $s$ rather than $t$)
$$\EE_x\left( \e^{- q  {R_{N_t+1}}} {\mathcal E}_{R_{N_t+1}} \Indic{{R_{N_t+1}}<\infty} \mid {\mathcal F}_s\right)
\leq {\mathcal S}^{(q)}_s.
$$
We conclude the proof by using once again \eqref{e:towerprep} and the so-called tower property of conditional expectations on the left-hand.
\end{proof}

The supermartingale ${\mathcal S}^{(q)}$ in  Lemma \ref{L2} enables us to introduce a possibly defective (i.e. possibly with finite lifetime $\zeta$)   c\`adl\`ag Markov process $Y^{(q)}=(Y^{(q)}_t)_{0\leq t < \zeta}$ with distribution denoted by  $\PP^{(q)}$ as follows. For every  $t\geq 0$ and every nonnegative functional $F$ defined on  Skorokhod's space ${\mathcal D}_{[0,t]}$ of c\`adl\`ag paths $\omega: [0,t]\to (0,\infty)$, one sets 
\[ \EE^{(q)}_x[ F((Y^{(q)}_{s})_{0\leq s \leq t}), \zeta>t]
  = \frac{1}{{\ell}_q(x)} \EE_x[ \mathcal{S}^{(q)}_t F((X_{s})_{0\leq s \leq t})], \qquad x > 0.
\]
We stress that 
 the distribution of $(Y^{(q)}_{s})_{0\leq s \leq t}$ under the conditional law 
$\PP^{(q)}_x(\cdot \mid \zeta>t)$ is absolutely continuous with respect to that $(X_{s})_{0\leq s \leq t}$ under $\PP_x$, and as a consequence, $Y^{(q)}$ inherits irreducibility from \eqref{e:irred}.

\begin{lemma} \label{L3} \begin{enumerate}
\item[(i)] Suppose that the assumption (i) of Theorem \ref{T1} holds.
Then $ \rho\geq \lambda$.
\item[(ii)] Suppose further that the assumption (ii) of Theorem \ref{T1} also holds, and set $Y\coloneqq Y^{(\rho)}$. 
 Then  there exists $b>0$ and $s>0$ sufficiently large, such that
 $$\liminf_{t\to \infty} \PP^{(\rho)}_{b}\left( Y_r=b \text{ for some } r\in[t,t+s]\right)>0.$$
 \end{enumerate} 
 
\end{lemma}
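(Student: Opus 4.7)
The plan is to treat (i) via a Laplace-transform/renewal argument at returns of $X$ to $x_1$, and (ii) by translating assumption (ii) of Theorem~\ref{T1} through the change of measure defining $Y^{(\rho)}$ and combining it with an analogue of Lemma~\ref{L-1} for the tilted process.

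For (i), I would analyse $\hat T(q) := \int_0^\infty \e^{-qt} T_tf(x_1) \,\dd t$, which is finite for every $q > \rho$ by assumption. Decomposing the Feynman-Kac expectation according to the successive returns $R_n$ of $X$ to $x_1$ and applying the strong Markov property at each $R_n$, I expect to obtain the renewal identity
\begin{equation*}
  \hat T(q) = x_1 \, \hat{\Phi}(q) \sum_{n \geq 0} L_{x_1, x_1}(q)^n \quad \in [0, +\infty],
\end{equation*}
where $\hat{\Phi}(q) := \int_0^\infty \e^{-qs} \EE_{x_1}\!\bigl(\mathcal{E}_s f(X_s)/X_s;\, s < R_1\bigr) \dd s$. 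If $\rho < \lambda$, I would pick any $q \in (\rho, \lambda)$; then $L_{x_1, x_1}(q) \geq 1$ by the very definition \eqref{e:lambda}, so the geometric series is infinite, while $\hat{\Phi}(q) > 0$ because $f \not\equiv 0$ and, by irreducibility combined with the strong Markov property at returns, any neighbourhood of a point $y_0$ where $f$ is positive must be visited during the first excursion from $x_1$ with positive probability (otherwise iterating across excursions via strong Markov at each $R_n$ would contradict irreducibility). The resulting contradiction forces $\rho \geq \lambda$.

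For (ii), part (i) ensures that $\mathcal{S}^{(\rho)}$ is a supermartingale (Lemma~\ref{L2}) and $\ell_\rho$ is bounded away from $0$ and $\infty$ on every compact subinterval (Lemma~\ref{L1}), so the tilted process $Y = Y^{(\rho)}$ is well-defined. Unwinding the change of measure rewrites
\begin{equation*}
  \e^{-\rho t} T_tg(x_2) = x_2 \,\ell_\rho(x_2)\, \EE^{(\rho)}_{x_2}\!\bigl(g(Y_t)/(Y_t \ell_\rho(Y_t));\, \zeta > t\bigr),
\end{equation*}
and since $g$ is compactly supported in some $[\alpha, \beta] \subset (0,\infty)$, the integrand is dominated by a constant times $\Indic{Y_t \in [\alpha, \beta]}$. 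Assumption (ii) of Theorem~\ref{T1} therefore forces $\liminf_{t \to \infty} \PP^{(\rho)}_{x_2}(Y_t \in [\alpha, \beta],\, \zeta > t) > 0$.

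The remaining task---and the main technical obstacle---is to upgrade this mass-on-a-compact statement to the required return estimate. I expect to do this via the following analogue of Lemma~\ref{L-1} for $Y$: for any compact $K \subset (0, \infty)$ there exist $s_0, p_0 > 0$ such that $\PP^{(\rho)}_y(H_Y(z) \leq s_0) \geq p_0$ for all $y, z \in K$, where $H_Y$ is the hitting time under $Y$. Applying the change of measure at the bounded stopping time $H(z) \wedge s_0$ yields
\begin{equation*}
  \PP^{(\rho)}_y(H_Y(z) \leq s_0) = \frac{\ell_\rho(z)}{\ell_\rho(y)}\, \EE_y\!\bigl(\e^{-\rho H(z)} \mathcal{E}_{H(z)};\, H(z) \leq s_0\bigr),
\end{equation*}
and a uniform lower bound follows by combining Lemma~\ref{L-1}, the two-sided control of $\ell_\rho$ on $K$ from Lemma~\ref{L1}, and the trivial inequalities $\mathcal{E}_{H(z)} \geq 1$ and $\e^{-\rho H(z)} \geq \e^{-\max(\rho,0)\, s_0}$ on $\{H(z) \leq s_0\}$. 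Finally, choosing $K \supset [\alpha, \beta] \cup \{x_2\}$ and setting $b := x_2$, the Markov property of $Y$ at time $t$ delivers
\begin{equation*}
  \PP^{(\rho)}_{x_2}\bigl(Y_r = x_2 \text{ for some } r \in [t, t+s_0]\bigr) \geq p_0\, \PP^{(\rho)}_{x_2}\bigl(Y_t \in [\alpha, \beta],\, \zeta > t\bigr),
\end{equation*}
and passing to the liminf yields (ii) with $b = x_2$ and $s = s_0$.
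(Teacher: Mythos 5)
Your proof is correct and follows essentially the same route as the paper's: (i) via finiteness of the Laplace transform $\int_0^\infty \e^{-qt}T_tf(x_1)\dd t$ for $q\in(\rho,\lambda)$ against $L_{x_1,x_1}(q)\geq 1$ (the paper delegates your renewal-decomposition computation to Proposition~3.3 of \cite{BW}), and (ii) via the change of measure to extract mass on a compact and then a uniform hitting bound for $Y$. The only cosmetic differences in (ii) are that you prove a full compact hitting-time analogue of Lemma~\ref{L-1} for the tilted process and take $b=x_2$, whereas the paper bounds the simpler probability of reaching the upper endpoint $b$ along the flow with no jump and no killing, then transfers from $x_2$ to $b$ by the strong Markov property.
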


\begin{proof}
(i) Suppose $\rho<\lambda$ and  pick any $q\in(\rho,\lambda)$. On the one hand, since $q>\rho$, assumption (i) of Theorem \ref{T1} entails that
$$\int_0^{\infty} \e^{-q t} T_tf(x_1)\dd t <\infty$$
for some $x_1>0$ and some continuous function $f: (0,\infty)\to \RR_+$ with $f\not\equiv 0$. On the other hand, 
as $q<\lambda$, we have $L_{x_1, x_1}(q)\geq 1$, and the assertion above contradicts Proposition 3.3 in \cite{BW}.

(ii)  Thanks to (i), we may now take $q=\rho$. 
Note from the very definition of $Y$ that the Feynman-Kac formula \eqref{e:fk} can be translated as follows: for every $x>0$ and every continuous and compactly supported function $f:(0,\infty)\to \RR$, there is the identity 
\begin{equation} \label{e:FK2}
\frac{\e^{-\rho t}}{x {\ell}(x)} T_tf(x)=  \EE_x^{(\rho )}\left( \frac{ f(Y_t)} {Y_t \ell(Y_t)},  \zeta >t \right),
\end{equation}
with $\ell\coloneqq {\ell}_{\rho}$.
Combining this with assumption (ii) of Theorem \ref{T1} and the fact that, thanks to Lemma \ref{L1},  $\ell$ remains bounded away from $0$ on compact intervals of $(0,\infty)$, we deduce that
\begin{equation} \label{e:probmin}
\liminf_{t\to \infty} \PP^{(\rho)}_{x_2}(Y_t\in[a,b], \zeta>t) >0,
\end{equation}
for any  $0<a<b$ such that  ${\rm Supp}(g)\subseteq [a,b]$.

On the other hand, recall the notation from the first paragraph in the proof of Lemma \ref{L-1}, and 
for every $x\in[a,b]$, consider the probability $p^{(\rho)}(x)$ that process $Y$ started at $x$ reaches $b$ at time $s(x,b)$, before dying or making any jump.  The  obvious bound  $ {\mathcal S}^{(\rho)}_t\geq \ell(X_t)\exp(-\rho^+ t)$ (where $\rho^+$ stands for the positive part of $\rho$) yields
\begin{eqnarray*}p^{(\rho)}(x)&=&\frac{1}{\ell(x)} \EE_x\left({\mathcal S}^{(\rho)} _{s(x,b)}, X \text{ has no jump before time }s(x,b)\right)\\
&\geq &\frac{ \ell(b)}{\ell(x)} \exp\left( -\rho^+ s(x,b)\right) p(x,b)\\
&\geq &\frac{\ell(b)}{\sup_{[a,b]}\ell} \exp\left( -\rho^+ s(a,b)\right)  p(a,b).
\end{eqnarray*}
Again by Lemma \ref{L1} and  the first paragraph in the proof of Lemma \ref{L-1}, the right-hand above is positive, hence
$$\inf_{[a,b]}p^{(\rho)}>0.$$

We now see from the Markov property of $Y$ that
$$\PP^{(\rho)}_{x_2}\left( Y_r=b \text{ for some } r\in[t,t+s(a,b)]\right)\geq \inf_{[a,b]}p^{(\rho)}\times \PP^{(\rho)}_{x_2}(Y_t\in[a,b], \zeta>t),$$
and then, combining with \eqref{e:probmin}, that
$$\liminf_{t\to \infty} \PP^{(\rho)}_{x_2}\left( Y_r=b \text{ for some } r\in[t,t+s(a,b)]\right)>0.$$
Recalling that $Y$ is irreducible and applying the strong Markov property at time $H_Y(b)$ completes the proof. 
\end{proof}

We readily deduce from Lemma \ref{L3} the following
\begin{corollary}\label{C1} Under the assumptions (i) and (ii) of Theorem \ref{T1}, the Markov process $Y=Y^{(\rho)}$ is point-recurrent and positive, that is
$$\EE_x(H_Y(y))<\infty\qquad \text{ for every }x,y>0,$$
where $H_Y(y)\coloneqq \inf\{t\in(0,\zeta): Y_t =y\}$ stands for the first hitting time of $y$ by the process $Y$, with the usual convention that $\inf \emptyset =\infty$.
\end{corollary}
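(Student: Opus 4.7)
The plan is to first establish positive recurrence of $Y$ at the distinguished point $b$ provided by Lemma \ref{L3}(ii), and then to extend to all pairs $(x,y)$ by combining the irreducibility of $Y$ with Wald's identity and an analogue of Lemma \ref{L-1} tailored to $Y$.

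First, write $R\coloneqq H_Y(b)$ for the first return time to $b$ under $\PP^{(\rho)}_b$, and $R_1<R_2<\cdots$ for the successive returns. Excursions from $b$ are i.i.d.\ by the strong Markov property, and each one either ends with a return to $b$ or with killing; if the return probability $p\coloneqq\PP^{(\rho)}_b(R<\infty)$ were strictly less than $1$, the total number of returns would be geometric and hence a.s.\ finite, so that $\PP^{(\rho)}_b(Y_r=b\text{ for some }r\in[t,t+s])\to 0$ as $t\to\infty$, contradicting Lemma \ref{L3}(ii). Hence $p=1$ and $\PP^{(\rho)}_b(\zeta=\infty)=1$. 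The returns $(R_n)$ therefore form an ordinary renewal process whose inter-arrival law is that of $R$, which is non-arithmetic since $Y$ inherits the piecewise deterministic structure of $X$ and $X$'s jump times have absolutely continuous laws under \eqref{H8}. If $\EE^{(\rho)}_b[R]$ were infinite, Blackwell's renewal theorem would yield $\PP^{(\rho)}_b(\text{some }R_n\in[t,t+s])\to s/\EE^{(\rho)}_b[R]=0$, again contradicting Lemma \ref{L3}(ii). Hence $\EE^{(\rho)}_b[R]<\infty$.

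Next, fix $y>0$. By the irreducibility of $Y$ noted after Lemma \ref{L2}, the quantity $p_y\coloneqq\PP^{(\rho)}_b(H_Y(y)\leq R_1)$ must be strictly positive, for otherwise the i.i.d.\ excursion structure would force $\PP^{(\rho)}_b(H_Y(y)<\infty)=0$. Let $N$ be the index of the first excursion during which $y$ is visited; then $N$ is a stopping time for the i.i.d.\ excursion sequence, geometric with mean $1/p_y$, and Wald's identity yields
$$\EE^{(\rho)}_b[H_Y(y)]\leq \EE^{(\rho)}_b[R_N]=\EE^{(\rho)}_b[R]/p_y<\infty.$$
For an arbitrary starting point $x>0$, the strong Markov property at $H_Y(b)$ reduces matters to proving $\EE^{(\rho)}_x[H_Y(b)]<\infty$. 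I would obtain this via a $Y$-analogue of Lemma \ref{L-1}: pick a compact interval $[a',b']\subset(0,\infty)$ containing both $x$ and $b$, and, mimicking the computation at the end of the proof of Lemma \ref{L3}(ii), bound the probability under $\PP^{(\rho)}_z$ that $Y$ travels deterministically from $z\in[a',b']$ to $b$ without any jump from below by
$$\frac{\ell(b)}{\sup_{[a',b']}\ell}\,\e^{-\rho^{+}s(a',b')}\,p(a',b')\,\eqqcolon\,c'>0.$$
A standard geometric-trial argument using the strong Markov property at multiples of $s(a',b')$ then yields $\EE^{(\rho)}_x[H_Y(b)]\leq s(a',b')/c'<\infty$, completing the proof.

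I expect the verification that the inter-arrival law of the renewal process is non-arithmetic, and the transfer of the deterministic-flow bound from $X$ to $Y$, to be the two main technical points; both should follow however from machinery already assembled in the paper (Lemma \ref{L1}, the formula \eqref{e:nojump}, and the density estimates in the proof of Lemma \ref{L3}(ii)), so no essentially new ideas should be required.
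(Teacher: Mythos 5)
Your overall plan --- locate a positive recurrent state $b$ via Lemma \ref{L3}(ii), control its return time by renewal theory, then extend to arbitrary starting points by irreducibility --- is the same as the paper's. Two technical choices differ: you invoke Blackwell's renewal theorem, which obliges you to check non-arithmeticity of the return law, whereas the paper uses the elementary renewal theorem (a Ces\`aro statement, $\lim t^{-1}U(t)=1/\EE_b(H_Y(b))$) which needs no such hypothesis; and you make explicit, via Wald's identity and the i.i.d.\ excursion structure, the step from $\EE^{(\rho)}_b(H_Y(b))<\infty$ to $\EE^{(\rho)}_b(H_Y(y))<\infty$ for all $y$, which the paper leaves implicit in the phrase ``since $Y$ is irreducible, the conclusion follows''. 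Both of these detours are fine.

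The genuine gap is in your final step, the passage to $\EE^{(\rho)}_x(H_Y(b))<\infty$ for arbitrary $x>0$. The uniform bound $\EE^{(\rho)}_x[H_Y(b)]\le s(a',b')/c'$ you claim cannot hold: it is independent of $x$, while the expected hitting time of $b$ certainly blows up as $x$ tends to $0$ or to $\infty$. Concretely, the lower bound $c'$ on the probability of reaching $b$ along the deterministic flow only makes sense for starting points $z\le b$ (the flow is strictly increasing and $Y$, like $X$, has only downward jumps, so there is no deterministic route from $z\in(b,b']$ down to $b$), and it is not available at all for $z$ outside $[a',b']$. Yet at the successive times $ns(a',b')$ in your geometric-trial scheme the process need not lie in $[a',b]$, so the ``success probability'' in each trial is not bounded below by $c'$ and the scheme collapses.

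The correct way to finish, staying inside the machinery you already built, is to run the excursion decomposition the other way round. Since $\EE^{(\rho)}_b(H_Y(b))<\infty$ and, by irreducibility together with the same i.i.d.\ excursion argument you used to get $p_y>0$, one has $\PP^{(\rho)}_b(H_Y(x)<H_Y(b))>0$ for every $x>0$, the strong Markov property applied at time $H_Y(x)$ inside the excursion straddling the first visit to $x$ gives
$$\EE^{(\rho)}_b\bigl(H_Y(b)\bigr)\;\ge\;\EE^{(\rho)}_b\bigl(H_Y(b);\,H_Y(x)<H_Y(b)\bigr)\;\ge\;\PP^{(\rho)}_b\bigl(H_Y(x)<H_Y(b)\bigr)\,\EE^{(\rho)}_x\bigl(H_Y(b)\bigr),$$
whence $\EE^{(\rho)}_x(H_Y(b))<\infty$. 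Combined with your Wald step this yields $\EE^{(\rho)}_x(H_Y(y))\le\EE^{(\rho)}_x(H_Y(b))+\EE^{(\rho)}_b(H_Y(y))<\infty$ for every $x,y>0$, completing the proof.
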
 
\begin{proof}
We start by observing from Lemma \ref{L3}(ii), the irreducibility of $Y$, and an application of the strong Markov property at time $H_Y(b)$ that $b$ must be a recurrent state for $Y$, i.e. $$\PP_b(H_Y(b)<\infty)=1.$$ By the strong Markov property at return times, the set of passage times at $b$, $\{t\geq 0: Y_t=b\}$ is regenerative, i.e. it can be expressed as the set of partial sums of a sequence of independent copies of the variable $H_Y(b)$. 
Its so-called renewal function is given by
$$U(t)\coloneqq \EE_b\left({\rm Card}\{r\leq t: Y_r=b\}\right),\qquad t\geq 0,$$
and the elementary renewal theorem shows that
$$\lim_{t\to \infty} t^{-1}U(t) = 1/\EE_b(H_Y(b)).$$
Again from an application of the strong Markov property at the first hitting time of $b$ under $\PP_{x_2}^{(\rho)}$, we easily see that  for every $s>0$, there is the lower bound
$$\lim_{t\to \infty} t^{-1}U(t) \geq s^{-1} \liminf_{t\to \infty} \PP^{(\rho)}_{x_2}\left( Y_r=b 
\text{ for some } r\in[t,t+s]\right),$$
and we now can deduce from Lemma \ref{L3}(ii) that $\EE_b(H_Y(b))<\infty$. 
Since $Y$ is irreducible, the conclusion of the statement follows. 
\end{proof}

We now have all the ingredients needed to prove Theorem \ref{T1}.
Indeed, Corollary \ref{C1} implies that under the assumptions (i) and (ii) of Theorem \ref{T1},
$Y$ cannot be defective, i.e. $\PP_x^{(\rho)}(\zeta =\infty)=1$. That is, equivalently, $\EE_x({\mathcal S}^{(\rho)}_t)=\ell(x)$ for all $t\geq 0$, and since a supermartingale with a constant expectation must be a martingale, ${\mathcal S}^{(\rho)}$ is a $\PP_x$-martingale for every $x>0$. 

Then, point-recurrence for $Y$ gives for every $x>0$
$$1= \lim_{t\to \infty} \PP^{(\rho)}_x(H_Y(x) \leq t) = \lim_{t\to \infty}
\frac{1}{\ell(x)} \EE_x[ \mathcal{S}^{(\rho)}_t, H (x) \leq t].$$
On the other hand, the martingale property of ${\mathcal S}^{(\rho)}$ under $\PP_x$ and the optional sampling theorem yield
$$\EE_x[ \mathcal{S}^{(\rho)}_t, H(x) \leq t]=\EE_x[ \mathcal{S}^{(\rho)}_{H(x)}, H(x) \leq t]=\ell(x)\EE_x[ \e^{-\rho H(x)} \mathcal{E}_{H(x)}, H(x) \leq t].
$$
We deduce by monotone convergence that 
$$L_{x,x}(\rho)= \EE_x[ \e^{-\rho H(x)} \mathcal{E}_{H(x)}, H(x) <\infty]=1,$$
which implies both that $\lambda=\rho$ and the first condition in \eqref{e:condBW} holds. 

We now see that the function $\ell=\ell_{\rho}=\ell_{\lambda}$ here is the same as that in Section 4 of \cite{BW}, the martingale $\mathcal{S}^{(\rho)}$ coincides with the martingale ${\mathcal M}$ there, and finally, the 
Markov process $Y$ here is the same as that in Section 5 of \cite{BW}. Recall from Corollary \ref{C1} that $Y$ is positive recurrent, and we conclude from Lemma 5.2(i) in \cite{BW} that the right-derivative of the Laplace transform $L_{x,x}(\cdot)$ at $\lambda$ is necessarily finite, which 
is the second condition in \eqref{e:condBW}. The proof of Theorem \ref{T1} is now completed.

\section{Proof of Theorem \ref{T2}}\label{s:PT2}

Our goal in this section is to check that, when the assumptions of Theorem \ref{T2} are fulfilled, then \eqref{e:condBW2} holds, as the exponential convergence then follows from Theorem 1.1  in \cite{BW}. 
This will be achieved in three main steps.

To start with, we work on a compact interval $[a,b]$, where $0<a<b$ are given, and consider the first exit-time 
$$\sigma(a,b)\coloneqq \inf\{t>0: X_t\not\in [a,b]\}.$$
We first discuss  irreducibility for the process killed when exiting from $[a,b]$,
which is a necessary preamble for the rest of our analysis. We then verify that the Krein-Rutman theorem can be applied  in this compact setting, by analyzing the trajectories of $X$. 
This yields a principal eigenvalue $\rho_{a,b}$ for the system where particles are killed when exiting $[a,b]$, and a corresponding positive eigenfunction $h_{a,b}$. We then construct useful martingales  from the latter, which in turn will enable us to compute certain expectations by application of optional sampling.

For the next step, we fix the lower-boundary point $a$ small enough (respectively, the upper-boundary point $b$ large enough), and let $b$ tend to $\infty$ (respectively, $a$ tend to $0+$). We shall establish the existence of $\lambda_a<\lambda$  and a non-degenerate function $g_a: [a,\infty)\to \RR_+$ (respectively, $\lambda^b<\lambda$ and $g^b: (0,b]\to \RR_+$) such that
the process $ g_a(X_t) {\mathcal E}_t \e^{-\lambda_a t} \Indic{t<\sigma(a,\infty)}$ (respectively, 
$ g^b(X_t) {\mathcal E}_t \e^{-\lambda^b t} \Indic{t<H(b)}$) is a supermartingale.

Finally, \eqref{e:condBW2} is established by putting the pieces together. In short, we pick a large enough interval $[a,b]$, $q<\lambda$ close enough to $\lambda$, decompose the excursion of the process away from its starting point 
at certain first-exit times, and
estimate the various pieces using the preceding steps.

\subsection{Irreducibility in compact intervals}
We start by addressing the slightly technical question of irreducibility. Even though $X$ has been assumed to be irreducible, it may happen that for some $0<a<b$, there exist two states $x<y$ both in $ (a,b)$ such that no path of $X$ started from $y$ can reach $x$ without exiting first from $[a,b]$. Recall however that  the probability that $X$
started from $x$ follows the flow velocity without having jumps until it reaches $y$ is always positive, so the problem can only arise when the starting point is larger than the target. When this occurs, the process killed at time $\sigma(a,b)$ is then no longer irreducible, and this creates an obstacle for our analysis. 

We call an interval $(a,b)$ with $0<a<b$ good, if the process killed at time $\sigma(a,b)$ remains irreducible, that is if  \begin{equation}\label{e:kilirr}
\PP_x(H(y)<\sigma(a,b))>0\qquad \text{ for all } x,y\in(a,b).
\end{equation}
We now argue that we can always find good intervals $(a,b)$ with  $a>0$ as small as we wish and $b$ as large as we wish.

\begin{lemma}\label{L8} For every $\varepsilon \in(0,1)$, there exists a good interval $(a,b)$ with $a<\varepsilon$ and $b>1/\varepsilon$.
\end{lemma}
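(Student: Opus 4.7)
Fix any $z_0 \in (\varepsilon, 1/\varepsilon)$; the goal is to find $(a, b) \ni z_0$ with $a < \varepsilon$ and $b > 1/\varepsilon$ that is good. One application of the strong Markov property at $H(z_0)$ reduces the goodness of such an $(a, b)$ to the pair of reachability conditions
\[
\PP_{z_0}\bigl(H(y) < \sigma(a, b)\bigr) > 0 \quad \text{and} \quad \PP_y\bigl(H(z_0) < \sigma(a, b)\bigr) > 0 \qquad \text{for every } y \in (a, b).
\]
The ``upward'' instances --- the first inequality for $y \geq z_0$, the second for $y \leq z_0$ --- are automatic: the flow alone carries $X$ between the two points without any jump, with probability $p(z_0, y)$ or $p(y, z_0) > 0$ by \eqref{e:nojump}. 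The content of the lemma therefore lies in the ``downward'' instances, which necessarily involve at least one jump of $X$.

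For each such $y$ taken in isolation, I would invoke irreducibility \eqref{e:irred} together with the non-accumulation of jumps in finite time: on $\{H(y) < \infty\}$ the trajectory $(X_t)_{0 \leq t \leq H(y)}$ almost surely remains in a random compact subinterval of $(0, \infty)$, so a monotone-convergence argument yields deterministic constants $0 < m(y) < y \wedge z_0$ and $M(y) > y \vee z_0$ with $\PP_{z_0}(H(y) < \sigma(m(y), M(y))) > 0$, and similarly for the reverse reachability. Thus for any $(a, b) \supseteq (m(y), M(y))$ the relevant probability is strictly positive.

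The main obstacle is to consolidate these pointwise witnesses into a single good interval simultaneously working for every $y \in (a, b)$, rather than only for one $y$ at a time. My strategy here would use an inheritance mechanism built from the downward-only jump structure of $X$: on the event realizing reachability of a target $y_\star$, the last jump before $H(y_\star)$ lands at a random point $Z_\star < y_\star$, after which the upward flow hits every $y' \in [Z_\star, y_\star]$ before reaching $y_\star$, so the same witnessing interval automatically realizes reachability at every such $y'$. Starting from witnesses close to $\varepsilon$ and to $1/\varepsilon$ to fix suitable boundary values $a < \varepsilon$ and $b > 1/\varepsilon$, one iteratively applies this inheritance, using \eqref{e:irred} to argue that the intermediate jump distributions have enough support to cover any residual gaps, until every target in $(a, b)$ has been absorbed. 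The downward-only jump structure --- which forces the final approach to any target to proceed via an upward flow segment --- is precisely what makes this propagation succeed without any explicit non-degeneracy assumption on $\bar k$ beyond global irreducibility.
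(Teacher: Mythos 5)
Your initial reduction (via $z_0$ and the strong Markov property), the observation that the ``upward'' instances are free, and the monotone-convergence step producing pointwise witness intervals $(m(y),M(y))$ are all correct. The trouble is concentrated exactly where you acknowledge it: the consolidation step, and as written it is a sketch rather than an argument.

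The inheritance mechanism establishes only that, if a downward target $y_\star\in(a,z_0)$ satisfies $\PP_{z_0}(H(y_\star)<\sigma(a,b))>0$, then so does every $y'$ in some \emph{left}-neighbourhood $(y_\star-\delta(y_\star),y_\star)$, where $\delta(y_\star)>0$ is not uniform. This makes the set $S=\{y\in(a,z_0):\PP_{z_0}(H(y)<\sigma(a,b))>0\}$ ``open to the left'' at each of its points, but says nothing about whether $S$ has gaps, and in particular nothing forces $S=(a,z_0)$. For the second family of downward instances --- the sources $y\in(z_0,b)$ needing $\PP_y(H(z_0)<\sigma(a,b))>0$ --- the mechanism does not even apply in the same form, since there one must propagate over \emph{starting points} rather than targets, and the upward flow gives only that this set is an initial segment $(z_0,c)$ without any control on $c$. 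The appeal to ``iterate until every target has been absorbed'' is therefore unjustified (the iteration may converge to a strict subset), and the phrase ``using \eqref{e:irred} to argue that the intermediate jump distributions have enough support'' conflates a global statement (irreducibility of the unkilled process) with the local non-degeneracy of $\bar k$ that is actually needed: irreducibility says $\PP_x(H(y)<\infty)>0$, not $\PP_x(H(y)<\sigma(a,b))>0$, and passing from the former to the latter with a single $(a,b)$ valid for all $y$ is precisely the content of the lemma. Your closing claim that the argument avoids ``any explicit non-degeneracy assumption on $\bar k$ beyond global irreducibility'' is also not substantiated.

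The paper's proof goes a different route and is worth comparing. It constructs elementary good intervals $(\alpha,\beta)$ directly: pick $\beta$ with $K(\beta)>0$, let $\alpha$ be a left accumulation point of $\supp\bar k(\beta,\cdot)$, and use the $L^1$-continuity in \eqref{H8} to ensure that the same support condition survives when $\beta$ is replaced by nearby $\beta'<\beta$; this makes $(\alpha,\beta)$ good by an explicit ``flow up to near $\beta$, jump once into $(\alpha,y)$, flow up to $y$'' route. Irreducibility is then used once, globally, to show that these elementary intervals cover $(0,\infty)$ (otherwise there would be a level $z$ below which the process could never return from above). A finite subcover of $[\varepsilon,1/\varepsilon]$ and chaining through overlaps then gives the desired $(a,b)$. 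This sidesteps the iteration problem entirely: there is no need to grow a reachable set point by point, because each elementary good interval already handles all of its own targets and sources simultaneously.
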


\begin{proof}
Consider any $\beta>0$ such that total jump rate $K(\beta)>0$, and then any $\alpha<\beta$ such that the right-neighborhood of $\alpha$ belongs to the support of $\bar k(\beta, \cdot)$, i.e. $\int_\alpha^x\bar k(\beta,y)\dd y>0$ for all $x>\alpha$. Thanks to \eqref{H8}, the same still holds when we replace $\beta$ by any $\beta'<\beta$ close enough to $\beta$, and using again the fact that on any finite time interval, the probability that  $X$ follows the flow velocity without having jump is positive, we now readily see that \eqref{e:kilirr} holds for $a=\alpha$ and $b=\beta$.

Such intervals $(\alpha,\beta)$ form a covering of $(0,\infty)$, as otherwise, the assumption of irreducibility \eqref{e:irred} would fail. There thus exists a finite covering of the compact interval $[\varepsilon, 1/\varepsilon]$, say $\{(\alpha_i, \beta_i): i=1, \ldots, n\}$, and it is then easy to check from the strong Markov property  that $a=\min \alpha_i$ and $b=\max \beta_i$ fulfill the requirements of the statement.  \end{proof}

\subsection{Applying the Krein-Rutman theorem in a compact interval}
We next
consider the Banach space ${\mathcal C}_0[a,b)$ of continuous functions $f: [a,b]\to \RR$ with $f(b)=0$, endowed with the usual norm $\|f\|=\sup_{x\in [a,b)}|f(x)|$. The reason for imposing  $f(b)=0$  is that the two boundary points  the interval $[a,b]$ have a different status for the Markov process killed at time $\sigma(a,b)$. Specifically, $a$ is an entrance boundary, in the sense that the process started at $a$ then stays in $[a,b]$ for a strictly positive amount of time $\PP_a$-a.s., whereas $b$ is an exit boundary, meaning that the process started at $b$ leaves $[a,b]$ instantaneously $\PP_b$-a.s. We do not assume right now that the interval $(a,b)$ is good, but this assumption will of course be essential in a later part of our analysis.

Recall our assumption \eqref{e:c-bound} and define 
$q_c\coloneqq 1+\sup_{x>0} c(x)/x$, so that
$$ {\mathcal E}_t \e^{-t q_c} \leq  \e^{-t} \qquad \text{ for all }t\geq 0.$$
 We
introduce for every bounded measurable function $f:[a,b]\to \RR$
$$U_{a,b}f(x)\coloneqq \EE_x\left(\int_0^{\sigma(a,b)} f(X_t) {\mathcal E}_t \e^{-t q_c} \dd t\right),\qquad x\in[a,b].$$

\begin{lemma} \label{L4}
The operator $U_{a,b}$ maps ${\mathcal C}_0[a,b)$ into itself. More precisely
the family of functions
$\{U_{a,b}f:  \| f\|\leq 1\}$ is equicontinuous.

\end{lemma}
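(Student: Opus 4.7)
The plan is to establish a Lipschitz estimate
\[|U_{a,b}f(x) - U_{a,b}f(y)| \leq C(y-x)\|f\|\]
for all $a \leq x < y \leq b$, with $C$ depending only on $a,b$ and the data $c, K$. Combined with the observation that $U_{a,b}f(b)=0$, this gives continuity on $[a,b]$, vanishing at $b$, and equicontinuity of the family $\{U_{a,b}f : \|f\| \leq 1\}$ simultaneously.

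I would exploit the piecewise deterministic structure by decomposing the trajectory at the deterministic time $\tau \coloneqq s(x,y)$, the flow-time from $x$ to $y$. Write $\phi_s(x)$ for the flow started at $x$ (so that, between consecutive jumps, the trajectory of $X$ starting from $x$ is $s \mapsto \phi_s(x)$). The event $A$ that $X$ makes no jump on $[0,\tau]$ has probability $p(x,y)$ by \eqref{e:nojump}, and on $A$ the trajectory is deterministic with $X_\tau = y$. The key observation is that the Feynman--Kac weight admits a closed form along the flow: by the change of variable $u = \phi_s(x)$, $du = c(u)\,ds$,
\[\int_0^\tau \frac{c(\phi_s(x))}{\phi_s(x)}\, ds = \int_x^y \frac{du}{u} = \log(y/x),\]
so ${\mathcal E}_\tau = y/x$ on $A$. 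Applying the Markov property at the deterministic time $\tau$ (noting that $\mathbbm{1}_A \in \mathcal{F}_\tau$), splitting the time integral at $\tau$, and bounding the contribution of $A^c$ by $\PP_x(A^c)\cdot\|f\|\int_0^\infty e^{-t}\, dt$ thanks to ${\mathcal E}_t e^{-t q_c} \leq e^{-t}$, I obtain the one-step identity
\[U_{a,b}f(x) = p(x,y)\int_0^\tau f(\phi_s(x))\,\frac{\phi_s(x)}{x}\,e^{-s q_c}\, ds + p(x,y)\,\frac{y}{x}\,e^{-\tau q_c}\, U_{a,b}f(y) + R(x,y),\]
with $|R(x,y)| \leq (1 - p(x,y))\|f\|$.

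From this identity, using $\|U_{a,b}f\|_\infty \leq \|f\|$ (which follows from the same bound on ${\mathcal E}_t e^{-tq_c}$) together with $|f(\phi_s(x))\phi_s(x)/x| \leq (b/a)\|f\|$, I would deduce
\[|U_{a,b}f(x) - U_{a,b}f(y)| \leq \|f\|\left[\Bigl|p(x,y)\tfrac{y}{x}e^{-\tau q_c} - 1\Bigr| + \tau\,\tfrac{b}{a} + (1 - p(x,y))\right].\]
Each bracketed term is $O(y-x)$ uniformly in $x,y \in [a,b]$: by \eqref{e:c-bound}, $c$ is bounded below on $[a,b]$, so $\tau \leq (y-x)/\inf_{[a,b]} c$; by \eqref{H8}, $K$ is bounded on $[a,b]$, so $1 - p(x,y) \leq \int_x^y (K/c)\,du = O(y-x)$; and the first bracket is a Lipschitz perturbation of $1$ with modulus depending only on $a, b, c, K$. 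The boundary identity $U_{a,b}f(b) = 0$ holds because $c(b)>0$ forces $\sigma(a,b) = 0$ $\PP_b$-a.s.

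The main obstacle I see is conceptual rather than technical: recognizing that the piecewise deterministic structure together with the clean identity ${\mathcal E}_\tau = y/x$ along the flow produces a one-step relation between $U_{a,b}f$ at two nearby points, with constants independent of $f$. Once this identity is in place, the rest reduces to elementary bookkeeping using \eqref{e:c-bound} and \eqref{H8}.
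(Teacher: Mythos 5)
Your argument is essentially the paper's own proof: decompose at the deterministic flow time $s(x,y)$, use the closed-form identity $\mathcal{E}_{s(x,y)}=y/x$ on the no-jump event together with the Markov property and the damping bound $\mathcal{E}_t e^{-t q_c}\le e^{-t}$, and show each resulting error term is small uniformly on $[a,b]$. The only cosmetic differences are that you carry the bookkeeping as an explicit one-step identity with remainder (rather than stating the resulting three-term inequality directly) and phrase the uniform modulus as a Lipschitz bound, which the paper's estimates also yield.
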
 
\begin{proof} We first note that  $U_{a,b}f(b)=0$  (since $\sigma(a,b)=0$, $\PP_b$-a.s.), and also that 
$U_{a,b}$ is a contraction, i.e. $\| U_{a,b}f\| \leq \| f \|$.
Then recall that for $0<x<y$, $s(x,y)$ denotes the travel time from $x$ to $y$ for a Lagrangian particle driven by the flow velocity $c$, and that $\Lambda_{x,y}$ stands for the event that  $X$ starts from $x$ and reaches $y$ before making any jump. Observe that on that event, we have
$$\ln {\mathcal E}_{s(x,y)} = \int_0^{s(x,y)} \frac{c(x(s))}{x(s)}\dd s = \ln y-\ln x.$$

Take $a\leq x <y \leq b$. By 
decomposing the trajectory at time $s(x,y)$ and applying the Markov property on the event $\Lambda_{x,y}$, we now easily see that for every $ f$ with $ \| f\|\leq 1$, there is the  inequality 
$$|U_{a,b}f(x)-U_{a,b}f(y)| \leq 2(1- \PP_x(\Lambda_{x,y})) +s(x,y) + \left| \frac{y}{x}\e^{-q_cs(x,y)} -1\right|.
$$
On the one hand,  \eqref{e:nojump} shows that $1-\PP_x(\Lambda_{x,y})$ converges to $0$ as $y-x\to 0+$, uniformly for $a\leq x < y \leq b$. On the other hand, it is easily checked that the same holds for $s(x,y)$ (because the flow velocity $c$ is bounded away from $0$ on $[a,b]$). 
This entails that 
$$ \sup_{x,y\in[a,b], |y-x|<\varepsilon} |U_{a,b}f(x)-U_{a,b}f(y)| \to 0 \quad \text{as } \varepsilon \to 0+, $$
uniformly for $f$ with $ \| f\| \leq 1$, and our claim is proven.
\end{proof}

 The subspace ${\mathcal C}^+_0[a,b)$ of nonnegative functions in ${\mathcal C}_0[a,b)$ is a reproducing cone, that is ${\mathcal C}^+_0[a,b)$ is a closed convex set which is stable by multiplication by nonnegative constants, and $f=f^+-f^-$ is a decomposition of a generic function $f\in {\mathcal C}_0[a,b)$ as the difference of two functions in ${\mathcal C}^+_0[a,b)$. We stress that, due to the $0$ boundary condition at $b$, the interior of ${\mathcal C}^+_0[a,b)$ is empty and ${\mathcal C}^+_0[a,b)$ is not a solid cone.
 
 It follows from Lemma \ref{L4} by the Arzel\`a-Ascoli theorem that
the operator $U_{a,b}$ is  compact. Obviously, it is also positive, i.e. maps the cone ${\mathcal C}^+_0[a,b)$ into itself. However, since ${\mathcal C}^+_0[a,b)$ is not a solid cone, $U_{a,b}$ is not strongly positive, and
 in order to apply the Krein-Rutman theorem to $U_{a,b}$, we thus still need to establish positivity of its spectral radius. 

\begin{lemma} \label{L6} For every good interval $(a,b)$, the spectral radius  $r(a,b)$ of $U_{a,b}$ is positive.
\end{lemma}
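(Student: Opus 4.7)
My plan is to establish a pointwise lower bound of the form $U_{a,b}\Ind_J \geq c_1 \Ind_J$ on a small compact subinterval $J \subset (a,b)$ with $c_1>0$, and then iterate. Fix an interior point $x_0 \in (a,b)$ and $\varepsilon>0$ small enough that $J \coloneqq [x_0-\varepsilon, x_0+\varepsilon] \subset (a,b)$, and pick any nonzero $g \in {\mathcal C}_0^+[a,b)$ with $g \geq \Ind_J$, which exists because $\sup J < b$. Granting the key inequality, positivity of $U_{a,b}$ and induction on $n$ give
$$U_{a,b}^{n+1} g \geq U_{a,b}(c_1^n \Ind_J) = c_1^n U_{a,b}\Ind_J \geq c_1^{n+1} \Ind_J$$
pointwise on $[a,b]$. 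Hence $\|U_{a,b}^n g\|\geq c_1^n$, and Gelfand's spectral radius formula yields $r(a,b) = \lim_n \|U_{a,b}^n\|^{1/n} \geq c_1 > 0$.

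It remains to prove the key inequality. Since $c(x)/x > 0$ by \eqref{e:c-bound}, one has $\mathcal{E}_t\geq 1$, and consequently $\mathcal{E}_t \e^{-tq_c} \geq \e^{-Tq_c}$ on the event $\{t\leq T\}$. It therefore suffices to exhibit $T>0$ and $c_3>0$ with
$$\inf_{x \in J} \EE_x\left[\int_0^{T \wedge \sigma(a,b)} \Ind_J(X_t)\,\dd t\right] \geq c_3,$$
whence $U_{a,b}\Ind_J(x) \geq c_3 \e^{-Tq_c}$ for every $x\in J$. For $x$ in the lower half $[x_0-\varepsilon, x_0]$ of $J$, the deterministic Lagrangian flow alone does the job: on the no-jump event $\Lambda_{x,x_0+\varepsilon}$ the trajectory stays in $J$ throughout a window of length $s(x,x_0+\varepsilon) \geq s(x_0, x_0+\varepsilon)>0$, and by \eqref{e:nojump} together with the monotonicity of $p$ in its first argument, $\PP_x(\Lambda_{x,x_0+\varepsilon}) \geq p(x_0-\varepsilon, x_0+\varepsilon) > 0$ uniformly in $x$.

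For $x$ in the upper half $(x_0, x_0+\varepsilon]$, the flow carries the process out of $J$ immediately, and this is where the good-interval hypothesis \eqref{e:kilirr} is essential. I plan to adapt the argument of Lemma \ref{L-1} to the process killed at $\sigma(a,b)$: since $(a,b)$ is good, $\PP_x(H(x_0-\varepsilon/2) < \sigma(a,b))>0$ for every $x\in J$, and a compactness/continuity argument analogous to that of Lemma \ref{L-1}, exploiting \eqref{H8} and \eqref{e:nojump} to steer paths uniformly in the starting point, upgrades this to a bound $\inf_{x\in J}\PP_x(H(x_0-\varepsilon/2) < (T/2)\wedge \sigma(a,b)) \geq \delta > 0$. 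The strong Markov property at that hitting time then reduces the upper-half case to the lower-half one already handled. I expect the main technical obstacle to be precisely this uniform-in-$x$ estimate across the top of $J$, where the flow works against us; this is the reason the good-interval notion was singled out in the preceding subsection, and beyond it no further structural assumption on $\bar k$ is required.
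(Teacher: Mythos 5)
Your strategy coincides with the paper's at its core: exhibit a nonnegative $f$ with $U_{a,b}f\geq\gamma f$ (so that $U_{a,b}^n f\geq\gamma^n f$) and then read off $r(a,b)\geq\gamma$ from Gelfand's formula using positivity. What differs is the choice of test function, and that choice has real consequences. The paper takes a tent $f$ supported on $[a,b']\subset[a,b)$, equal to $1$ on $[a,a']$: then for every $x\in[a,b']$ the no-jump trajectory $\Lambda_{x,b'}$ stays inside $[a,b]$ while travelling to $b'$, and the strong Markov property gives at once
$$U_{a,b}f(x)\ \geq\ \frac{b'}{x}\,\e^{-q_c s(x,b')}\,\PP_x(\Lambda_{x,b'})\,U_{a,b}f(b'),$$
with $U_{a,b}f(b')>0$ being the single place where \eqref{e:kilirr} is invoked. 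Because $f$ vanishes on $[b',b]$, this already yields $U_{a,b}f\geq\gamma f$ on all of $[a,b]$ with no case split. Your small interior bump $J=[x_0-\varepsilon,x_0+\varepsilon]$ forces a separate argument for the upper half of $J$, since the flow immediately pushes those starting points out of $J$; that is exactly the step you flag as the main obstacle.

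That step is genuinely needed but fillable, and in fact requires no compactness or continuity argument: for any $x\in J$, the no-jump trajectory carries $X$ from $x$ to the fixed landmark $b^-\coloneqq x_0+\varepsilon$ in time at most $s(x_0-\varepsilon,x_0+\varepsilon)$ while remaining in $J\subset(a,b)$, with probability $\geq p(x_0-\varepsilon,x_0+\varepsilon)>0$; from $b^-$, \eqref{e:kilirr} gives $r',q'>0$ with $\PP_{b^-}(H(x_0-\varepsilon/2)<r'\wedge\sigma(a,b))>q'$; the strong Markov property at $H(b^-)$ then delivers the uniform-in-$x$ bound with $T/2 = s(x_0-\varepsilon,x_0+\varepsilon)+r'$. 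Observe that this is the same manoeuvre as the paper's --- ride the flow to a fixed point, then use the good-interval property once from there --- just applied in a less favourable configuration. So your proof is sound once you carry out the step you flagged; the paper's tent over $[a,b']$ is cleaner precisely because $a$ is an entrance boundary, so the flow never exits the support of the test function at the bottom, eliminating the need for any return estimate on that support.
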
 
\begin{proof} 
Let $U_{a,b}^n$ denote the $n$-th power of $U_{a,b}$, so that by Gelfand's formula, 
$$r(a,b)= \lim_{n\to \infty}\sup\{\|U_{a,b}^n f\|^{1/n}: f\in {\mathcal C}_0[a,b), \| f\| \leq 1\}.$$
Take any $a'<b'$ in $(a,b)$ and  
consider the function $f\in {\mathcal C}^+_0[a,b)$ with  $\|f\|=1$, such that $f\equiv 1$ on $[a,a']$, $f\equiv 0$ on $[b',b]$,
and $f(x)=(b'-x)/(b'-a')$ for $x\in[a',b']$. 
Since $(a,b)$ is good, $\PP_{b'}(H(a')<\sigma(a,b))>0$, and it follows from the strong Markov property applied at time $H(a')$ that 
$$U_{a,b}f(b')\geq \EE_{b'}\left( {\mathcal E}_{H(a')}\e^{-q_c H(a')}, H(a')<\sigma(a,b)\right) U_{a,b}f(a')> 0.$$ 

For every $x\in[a,b']$, by focusing on the event $\Lambda_{x,b'}$ where trajectories follow the steady flow velocity $c$ until the hitting time of $b'$ without having any jump, and then applying the strong Markov property at time $H(b')$, 
 we get the lowerbound 
$$U_{a,b}f(x) \geq \frac{b'}{x} \exp(-q_c s(x,b'))\PP_x(\Lambda_{x,b'}) U_{a,b}f(b').$$
Then, plainly,
$$\inf_{a\leq x \leq b'}  U_{a,b}f(x) \geq \gamma\coloneqq \exp(-q_c s(a,b'))\PP_a(\Lambda_{a,b'}) U_{a,b}f(b')>0,$$
and therefore we have $U_{a,b}f(x) \geq \gamma f(x)$ for all $x\in[a,b]$. Since $U_{a,b}$ is a positive operator, we conclude from Gelfand's formula that $\gamma$ is a lowerbound for the spectral radius, and {\it a fortiori}  $r(a,b)>0$.
\end{proof}
We have now checked all the requirements for the Krein-Rutman theorem (see, e.g. Chapter 6 in Deimling \cite{Deimling}), which asserts that the spectral radius $r(a,b)$ is then an eigenvalue of the operator $U_{a,b}$ and also of the dual operator $U_{a,b}^*$, and that the corresponding eigenfunctions can be chosen positive. 
In this direction,  recall from the Riesz-Markov representation theorem that 
any linear functional on ${\mathcal C}_0[a,b)$ which is positive (in the sense that it maps ${\mathcal C}^+_0[a,b)$ into $\RR_+$), can be represented by a finite Borel measure  on $[a,b]$ that has no atom at $b$.
Plainly, the dual operator $U_{a,b}^*$ maps any such measure, say $m$, to another finite measure $U_{a,b}^*m$ on
$[a,b]$ without  atom at $b$, via the identity 
$$\ip{U_{a,b}^*m}{f}= \ip{m}{U_{a,b} f}\qquad \text{for all } f\in{\mathcal C}^+_0[a,b).$$
This is the first milestone for the proof of Theorem \ref{T2}, and we record it for future use.

\begin{proposition} \label{P2} Let $(a,b)$ be a good interval. Then there exist a function $h_{a,b}\in {\mathcal C}^+_0[a,b)$ with $h_{a,b}(x)>0$ for every $x\in[a,b)$ and a finite measure $\nu_{a,b}$ on $[a,b]$ with $\nu_{a,b}(\{b\})=0$, such that
$$\ip{\nu_{a,b}}{h_{a,b}}=1\ ,\ U_{a,b} h_{a,b} = r(a,b) h_{a,b}\ , \text{ and } U_{a,b}^*\nu_{a,b} = r(a,b)\nu_{a,b}.$$
\end{proposition}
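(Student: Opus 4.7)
The plan is to assemble the conclusion directly from the Krein-Rutman theorem together with the probabilistic structure encoded by the good-interval hypothesis. By Lemma \ref{L4} the operator $U_{a,b}$ is compact on ${\mathcal C}_0[a,b)$, by construction it is positive (maps ${\mathcal C}^+_0[a,b)$ to itself), and by Lemma \ref{L6} its spectral radius $r(a,b)$ is strictly positive. The Krein-Rutman theorem (as in Deimling \cite{Deimling}) therefore yields a nonzero $h_{a,b}\in{\mathcal C}^+_0[a,b)$ with $U_{a,b}h_{a,b}=r(a,b)h_{a,b}$, together with a nonzero positive continuous linear functional $\varphi$ on ${\mathcal C}_0[a,b)$ satisfying $\varphi\circ U_{a,b}=r(a,b)\varphi$. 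By the Riesz-Markov representation of positive functionals on ${\mathcal C}_0[a,b)$, $\varphi$ corresponds to a finite positive Borel measure $\nu_{a,b}$ on $[a,b]$ with $\nu_{a,b}(\{b\})=0$, and the eigenfunction equation translates into $U_{a,b}^*\nu_{a,b}=r(a,b)\nu_{a,b}$.

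The only genuine work is to promote the nonnegativity of $h_{a,b}$ to strict positivity on $[a,b)$. Since the cone ${\mathcal C}^+_0[a,b)$ has empty interior, this does not follow from strong positivity of the operator and must be established by hand. First, because $r(a,b)>0$ and $h_{a,b}\not\equiv 0$, continuity provides some $x_0\in[a,b)$ and an open neighborhood $V\subset[a,b)$ of $x_0$ on which $h_{a,b}\geq \delta>0$. Then for any $x\in[a,b)$, I would use the goodness of $(a,b)$ (Lemma \ref{L8}, via \eqref{e:kilirr}) to ensure $\PP_x(H(x_0)<\sigma(a,b))>0$, apply the strong Markov property at $H(x_0)$, and note that from $x_0$ the process $X$ follows the flow velocity without jumping for a short positive time with positive probability, so it spends a positive amount of time in $V$ before exiting $[a,b]$. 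Combined with the elementary lower bound ${\mathcal E}_t\e^{-tq_c}\geq \e^{-t}$ on any finite time interval, this forces
\[
r(a,b)h_{a,b}(x)=U_{a,b}h_{a,b}(x)\geq \EE_x\Bigl(\int_0^{\sigma(a,b)}h_{a,b}(X_t){\mathcal E}_t\e^{-tq_c}\,\dd t\Bigr)>0,
\]
so $h_{a,b}(x)>0$ for every $x\in[a,b)$.

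Finally, to arrange the normalization, observe that $\nu_{a,b}$ is a nonzero positive measure on $[a,b]$ with no mass at $b$, hence $\nu_{a,b}([a,b))>0$; coupled with the strict positivity of $h_{a,b}$ just proved, this gives $\ip{\nu_{a,b}}{h_{a,b}}\in(0,\infty)$, so rescaling $\nu_{a,b}$ by $1/\ip{\nu_{a,b}}{h_{a,b}}$ (which preserves the eigenrelation) yields $\ip{\nu_{a,b}}{h_{a,b}}=1$. The main obstacle, as indicated, is the strict positivity of $h_{a,b}$, and it is precisely the good-interval assumption that makes the irreducibility argument go through.
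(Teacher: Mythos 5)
Your argument is correct and follows the same route as the paper: invoke Krein--Rutman plus Lemma \ref{L4} (compactness) and Lemma \ref{L6} (positive spectral radius), then upgrade nonnegativity of $h_{a,b}$ to strict positivity via the fixed-point identity $h_{a,b}=r(a,b)^{-1}U_{a,b}h_{a,b}$ together with the irreducibility \eqref{e:kilirr} of the killed process. One small slip: the inequality ${\mathcal E}_t\e^{-tq_c}\geq\e^{-t}$ is backwards (the paper has $\leq$); what you actually need, and what is true, is simply that ${\mathcal E}_t\e^{-tq_c}>0$ (e.g.\ ${\mathcal E}_t\e^{-tq_c}\geq\e^{-q_ct}$ since ${\mathcal E}_t\geq1$), which is all the argument uses, so this is a cosmetic error and does not affect the conclusion.
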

\begin{proof} We are only left with the proof of the positivity assertion for $h_{a,b}$ on $[a,b)$. But this immediately follows from the irreducibility \eqref{e:kilirr}, the identity 
$$h_{a,b}(x) = r(a,b) ^{-1} \EE_x\left(\int_0^{\sigma(a,b)} h_{a,b}(X_t) {\mathcal E}_t \e^{-t q_c} \dd t\right),$$
 and the fact that $h_{a,b}\in{\mathcal C}^+_0[a,b)$ is not identically $0$.
\end{proof}

Proposition \ref{P2} enables us to introduce the following  useful martingale.
\begin{lemma} \label{L7} Set $\rho_{a,b}\coloneqq  q_c-1/r(a,b)$. 
The process
$${\mathcal M}_{a,b}(t)\coloneqq \Indic{t<\sigma(a,b)} h_{a,b}(X_t) {\mathcal E}_t \e^{-t \rho_{a,b}}\,, \qquad t\geq 0$$
is a $\PP_x$-martingale for every $x\in[a,b]$.
\end{lemma}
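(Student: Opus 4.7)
The strategy is to exploit the eigenvalue identity
$$rh(x)=\EE_x\left(\int_0^\sigma h(X_t)\mathcal{E}_t\e^{-tq_c}\dd t\right)$$
from Proposition~\ref{P2}, where I abbreviate $h\coloneqq h_{a,b}$, $r\coloneqq r(a,b)$ and $\sigma\coloneqq\sigma(a,b)$. Consider the Doob $\PP_x$-martingale $M_s\coloneqq\EE_x(Z\mid{\mathcal F}_s)$, where $Z$ denotes the random variable on the right-hand side above. Splitting the integral defining $Z$ at time $s$ and using the multiplicativity $\mathcal{E}_t=\mathcal{E}_s(\mathcal{E}_{t-s}\circ\theta_s)$ together with $\sigma=s+\sigma\circ\theta_s$ on $\{s<\sigma\}$, the Markov property applied to the tail integral yields the explicit decomposition
$$M_s=\int_0^{s\meet\sigma}h(X_u)\mathcal{E}_u\e^{-uq_c}\dd u+\Indic{s<\sigma}\,r\,h(X_s)\,\mathcal{E}_s\e^{-sq_c}.$$

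Next I take $\PP_x$-expectations: since $\EE_x(M_s)=\EE_x(Z)=rh(x)$ and the integrand $h(X_u)\mathcal{E}_u\e^{-uq_c}$ is bounded by $\|h\|\e^{-u}$ (by the very choice of $q_c$), Fubini produces the Volterra-type scalar equation
$$\phi(s)+\tfrac{1}{r}\int_0^s\phi(u)\dd u=h(x),\qquad \phi(s)\coloneqq\EE_x\bigl(\Indic{s<\sigma}h(X_s)\mathcal{E}_s\e^{-sq_c}\bigr).$$
Boundedness of $\phi$ makes the right-hand side Lipschitz, whence $\phi$ is $C^1$; the resulting ODE $\phi'=-\phi/r$ with $\phi(0)=h(x)$ has the unique solution $\phi(s)=h(x)\e^{-s/r}$. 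Since $\rho_{a,b}=q_c-1/r$ by definition, multiplying through by $\e^{sq_c-s\rho_{a,b}}=\e^{s/r}$ gives the crucial mean-value identity
$$\EE_x\bigl(\mathcal{M}_{a,b}(s)\bigr)=h(x),\qquad x\in[a,b],\ s\geq 0.$$

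Finally, the martingale property follows by a second application of the Markov property. On $\{s<\sigma\}$, the same multiplicativity and first-exit decomposition used earlier give
$$\mathcal{M}_{a,b}(t)=\mathcal{E}_s\e^{-s\rho_{a,b}}\bigl(\mathcal{M}_{a,b}(t-s)\circ\theta_s\bigr)\qquad(t\geq s),$$
so conditioning at time $s$ and using the mean-value identity at the random point $X_s$ yields $\EE_x\bigl(\mathcal{M}_{a,b}(t)\mid{\mathcal F}_s\bigr)=\Indic{s<\sigma}\mathcal{E}_s\e^{-s\rho_{a,b}}h(X_s)=\mathcal{M}_{a,b}(s)$; on $\{s\geq\sigma\}$ both sides vanish identically. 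The main piece of actual work is the careful conditional-expectation bookkeeping yielding the explicit form of $M_s$; once that is in hand, extracting the integral equation, solving it, and running the final Markov step are all essentially mechanical.
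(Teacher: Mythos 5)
Your proof is correct, and it shares the paper's starting point---the Doob martingale $M_s=\EE_x\bigl(\int_0^\sigma h(X_u)\mathcal{E}_u\e^{-uq_c}\dd u\mid\mathcal{F}_s\bigr)$ and the decomposition coming from $U_{a,b}h_{a,b}=r h_{a,b}$---but then diverges. The paper finishes by observing that the eigenvalue relation turns $\mathcal{M}_{a,b}$ into a stochastic integral $\mathcal{M}_{a,b}(t)=\mathcal{M}_{a,b}(0)+\tfrac{1}{r}\int_0^t\e^{-sq_c}\dd M_s$ with a bounded deterministic integrand, which is automatically a martingale. You instead take expectations, reduce to a scalar Volterra equation for $\phi(s)=\EE_x(\mathcal{M}_{a,b}(s))\e^{-s/r}$, solve the resulting ODE to get the constant-mean identity $\EE_x(\mathcal{M}_{a,b}(s))=h_{a,b}(x)$, and then promote this to the full martingale property by a shift/Markov argument. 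Your route is more elementary in that it sidesteps the stochastic-integration (integration-by-parts for c\`adl\`ag semimartingales) step, at the cost of an explicit extra Markov-property bootstrap at the end; the paper's stochastic-calculus one-liner is slicker once one is comfortable invoking it. Both are complete; nothing is missing in your argument---the boundedness $\phi(s)\leq\|h_{a,b}\|\e^{-s}$ does justify Fubini, the Lipschitz-then-$C^1$ bootstrap is sound, and the shift identity $\mathcal{M}_{a,b}(t)=\mathcal{E}_s\e^{-s\rho_{a,b}}(\mathcal{M}_{a,b}(t-s)\circ\theta_s)$ on $\{s<\sigma\}$ is exactly what the Markov property needs.
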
 
\begin{proof} Recall that $({\mathcal F}_t)_{t\geq 0}$ denotes the natural filtration of $X$. By the Markov property, we can express the martingale
$$M_t\coloneqq\EE_x\left(\int_0^{\sigma(a,b)} h_{a,b}(X_s) {\mathcal E}_s \e^{-s q_c} \dd s \, \mid \, {\mathcal F}_t\right)$$
as
$$M_t= \int_0^{t\wedge \sigma(a,b)} h_{a,b}(X_s) {\mathcal E}_s \e^{-s q_c} \dd s+ \Indic{t<\sigma(a,b)}  {\mathcal E}_t \e^{-t q_c} U_{a,b}h_{a,b}(X_t).$$
From the identity $U_{a,b}h_{a,b}=r(a,b) h_{a,b}$ and stochastic calculus, we deduce that 
$${\mathcal M}_{a,b}(t) = {\mathcal M}_{a,b}(0) + \frac{1}{r(a,b)} \int_0^t \e^{-s q_c} \dd M_s,$$
and the stochastic integral in the right-hand side is a $\PP_x$-martingale. 
\end{proof}

We now arrive at a second milestone of the proof of Theorem \ref{T2}.
\begin{proposition}\label{P1}
Take any good interval $(a,b)$. Then 
for  all $x,y\in(a,b)$, there is the identity
$$\EE_x\left({\mathcal E}_{H(y)} \e^{-\rho_{a,b}H(y)}, H(y) < \sigma(a,b)\right)=h_{a,b}(x)/h_{a,b}(y).$$
\end{proposition}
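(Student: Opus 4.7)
The plan is to apply the optional sampling theorem to the $\PP_x$-martingale $\mathcal{M}_{a,b}$ of Lemma \ref{L7} at the bounded stopping times $\tau_t \coloneqq H(y) \wedge \sigma(a,b) \wedge t$ and then let $t\to\infty$. A preparatory observation is that $\sigma(a,b) < \infty$ $\PP_x$-a.s.: from any starting point $z \in [a,b]$ the flow carries the process out of $[a,b]$ across the upper boundary $b$ before any jump with probability at least $p(a,b)>0$ within time $s(a,b)$, so iterating the strong Markov property yields $\PP_z(\sigma(a,b) > n\, s(a,b)) \leq (1-p(a,b))^n$. A fortiori $\tau \coloneqq H(y)\wedge\sigma(a,b) < \infty$ almost surely.

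Optional sampling at $\tau_t$, together with the fact that $\mathcal{M}_{a,b}$ vanishes at $\sigma(a,b)$ (by the factor $\Indic{t<\sigma(a,b)}$ in its definition), yields
\begin{eqnarr*}
h_{a,b}(x) & = & \EE_x[\mathcal{M}_{a,b}(\tau_t)] \\
& = & h_{a,b}(y)\, \EE_x\!\bigl[\mathcal{E}_{H(y)}\e^{-\rho_{a,b} H(y)}\, \Indic{H(y)<\sigma(a,b),\,H(y)\leq t}\bigr] + R(t),
\end{eqnarr*}
with $R(t)\coloneqq \EE_x[\mathcal{M}_{a,b}(t)\Indic{t<\tau}]$. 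By monotone convergence the first term tends, as $t\to\infty$, to $h_{a,b}(y)$ times the quantity on the left-hand side of the statement, so the proof reduces to showing that $R(t)\to 0$.

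For this last step, I would perform a Doob $h$-transform driven by $\mathcal{M}_{a,b}$: the prescription $\dd\tilde\PP_x|_{\mathcal{F}_t} \coloneqq (\mathcal{M}_{a,b}(t)/h_{a,b}(x))\,\dd\PP_x|_{\mathcal{F}_t}$ is a consistent family of probability measures, and $R(t) = h_{a,b}(x)\,\tilde\PP_x(t<\tau)$. Since $\mathcal{M}_{a,b}(t) = 0$ on $\{\sigma(a,b)\leq t\}$, the measure $\tilde\PP_x$ concentrates on paths that remain in $[a,b)$ for all time, so that $\tau = H(y)$ under $\tilde\PP_x$. A direct computation from the eigenrelation $\tilde{\mathcal{G}} h_{a,b} = \rho_{a,b} h_{a,b}$ shows that under $\tilde\PP_x$ the process $X$ is a conservative piecewise deterministic Markov process on $[a,b)$ with unchanged drift $c$ and new jump rates $\tilde k(x,z) = (h_{a,b}(z)/h_{a,b}(x))\,\bar k(x,z)$; irreducibility on $(a,b)$ is inherited from the goodness of $(a,b)$ by absolute continuity on each finite horizon, and the dual eigenrelation $U_{a,b}^*\nu_{a,b}=r(a,b)\nu_{a,b}$ of Proposition \ref{P2}, which translates into $\tilde T_t^*\nu_{a,b}=\e^{\rho_{a,b} t}\nu_{a,b}$, supplies the finite invariant measure $h_{a,b}(z)\,\nu_{a,b}(\dd z)$ for the $h$-transformed semigroup. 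Hence the transformed process is positive recurrent, so $\tilde\PP_x(H(y)<\infty)=1$ and $R(t)\to 0$. The main obstacle is precisely this recurrence of the $h$-transformed process, and the cleanest route to it is through the explicit invariant probability measure built from the left eigenfunction $\nu_{a,b}$.
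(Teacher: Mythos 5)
Your proposal is correct and follows essentially the same route as the paper: both perform the Doob $h$-transform driven by $\mathcal{M}_{a,b}$ (your $\tilde\PP_x$ is the paper's $\mathbf{P}^{(a,b)}_x$), extract the invariant probability $h_{a,b}\,\nu_{a,b}$ for the transformed process from the dual eigenrelation of Proposition \ref{P2}, and use the resulting point recurrence to pass to the limit in the bounded optional-sampling identity. The paper carries out the stationarity verification purely at the level of resolvents via the resolvent equation rather than asserting $\tilde T_t^*\nu_{a,b}=\e^{\rho_{a,b}t}\nu_{a,b}$ directly, but this is only a presentational difference.
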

\begin{proof} Perhaps, it could be  tempting to try to derive the statement from Lemma \ref{L7} by an application of optional sampling to the martingale ${\mathcal M}_{a,b}$ and the stopping time $H(y)$. Note however that this would not be legitimate as the latter is not bounded and the martingale ${\mathcal M}_{a,b}$ is not uniformly integrable. 

We  use the martingale ${\mathcal M}_{a,b}$ to  introduce a new Markov process 
$\left(Z_t\right)_{t\geq 0}$ on $[a,b)$ with law ${\mathbf P}_x^{(a,b)}$, by setting
\[ {\mathbf E}^{(a,b)}_x[ F((Z_s)_{0\leq s \leq t})]
  = \frac{1}{h_{a,b}(x)} \EE_x[ \mathcal{M}_{a,b}(t) F((X_{s})_{0\leq s \leq t})],
\]
where $F$ stands for a generic nonnegative functional  defined on  Skorokhod's space ${\mathcal D}_{[0,t]}$.
Recall Proposition \ref{P2} and consider the probability measure $m_{a,b}(\dd x)\coloneqq h_{a,b}(x) \nu_{a,b}(\dd x)$ on $[a,b)$. We claim that $m_{a,b}$
is a stationary distribution for $Z$. In this direction, define first for every bounded measurable function $f:[a,b)\to \RR$ and every $q>0$
$$V^qf(x)\coloneqq 
\EE_x\left(\int_0^{\sigma(a,b)} f(X_t) {\mathcal E}_t \e^{ -(q+q_c)t} \dd t\right),\qquad x\in[a,b].$$
In particular, $V^0=U_{a,b}$ and the resolvent equation reads
$V^q=U_{a,b}-qU_{a,b} V^q$. With this notation at hand, and recalling from Lemma \ref{L7} that $\rho_{a,b} = q_c-1/r(a,b)$, we have
\begin{eqnarray*}
{\mathbf E}^{(a,b)}_{m_{a,b}}\left(\int_0^{\infty} \e^{-qt} f(Z_t) \dd t\right) &=&
\EE_{\nu_{a,b}}\left(\int_0^{\sigma(a,b)} \e^{-(q+q_c-1/r(a,b))t} f(X_t) h_{a,b}(X_t) {\mathcal E}_t \dd t\right) \\
&=& \ip{\nu_{a,b}}{V^{q-1/r(a,b)}(f h_{a,b})}\\
&=& \ip{\nu_{a,b}}{U_{a,b}(f h_{a,b})}- (q-1/r(a,b)) \ip{\nu_{a,b}}{U_{a,b} V^{q-1/r(a,b)}(f h_{a,b})}\\
&=& r(a,b)\ip{\nu_{a,b}}{f h_{a,b}}- (r(a,b)q-1) \ip{\nu_{a,b}}{V^{q-1/r(a,b)}(f h_{a,b})},
\end{eqnarray*}
where we used the resolvent equation at the third line, and that $U_{a,b}^*\nu_{a,b}=r(a,b)\nu_{a,b}$ at the fourth. 
This yields 
$${\mathbf E}^{(a,b)}_{m_{a,b}}\left(\int_0^{\infty} \e^{-qt} f(Z_t) \dd t\right) = q^{-1} \ip{\nu_{a,b}}{f h_{a,b}} 
= q^{-1} \ip{m_{a,b}}{f },$$
showing that indeed $m_{a,b}$ is a stationary law for $Z$.

Plainly, $Z$ inherits irreducibility from \eqref{e:kilirr}. Further, just as $X$, its trajectories have only negative jumps and increase between two consecutive jumps times. The existence of a stationary law then easily implies point recurrence, so that for all $x,y\in(a,b)$, in the obvious notation, 
\begin{eqnarray*}
1=\lim_{t\to \infty} {\mathbf P}^{(a,b)}_x(H_Z(y)\leq t) &=& \lim_{t\to \infty} \frac{1}{h_{a,b}(x)} \EE_x[ \mathcal{M}_{a,b}(t) , H(y)\leq t\wedge \sigma(a,b)]\\
&=& \frac{h_{a,b}(y)}{h_{a,b}(x)}  \lim_{t\to \infty} \EE_x[ \mathcal{E}_{H(y)} \e^{-\rho_{a,b}H(y)} , H(y)\leq t\wedge \sigma(a,b)],
\end{eqnarray*}
where the last equality stems from  Lemma \ref{L7} and Doob's optional sampling theorem. By monotone convergence, this proves that 
$$\EE_x\left({\mathcal E}_{H(y)} \e^{-\rho_{a,b}H(y)}, H(y) < \sigma(a,b)\right)= \frac{h_{a,b}(x)}{h_{a,b}(y)} .$$
\end{proof}

Proposition \ref{P1} enables us to compare the eigenvalues $\rho_{a,b}$ for nested good intervals, and also with the Malthus exponent $\lambda$.

\begin{lemma} \label{L9} We have:
\begin{enumerate}
\item[(i)] Let $(a,b)$ and $(a',b')$ two good intervals with $a<a'<b'<b$. Then 
$$\rho_{a',b'}< \rho_{a, b}< \lambda.$$
\item[(ii)]  $\lambda=\sup\{\rho_{a,b}: (a,b)\text{ is a good interval}\}$. 
\end{enumerate}
\end{lemma}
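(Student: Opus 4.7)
The plan is to specialise Proposition~\ref{P1} to the diagonal $x=y=x_0$ for a conveniently chosen $x_0$, so that the identity there takes the same shape as the definition of the Malthus exponent via $L_{x_0,x_0}$, but with the event $\{H(x_0)<\infty\}$ replaced by the smaller $\{H(x_0)<\sigma(a,b)\}$. Both (i) and (ii) then follow by comparing such identities across different intervals and, for (ii), by passing to a limit along an exhausting sequence of good intervals.

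For part (i), fix any $x_0\in(a',b')$. Proposition~\ref{P1} applied on $(a',b')$ and on $(a,b)$ yields
$\EE_{x_0}(e^{-\rho_{a',b'}H(x_0)}{\mathcal E}_{H(x_0)},H(x_0)<\sigma(a',b'))=1=\EE_{x_0}(e^{-\rho_{a,b}H(x_0)}{\mathcal E}_{H(x_0)},H(x_0)<\sigma(a,b))$. I would argue that with positive probability, $X$ flows from $x_0$ up to $b'$ without any jump (so it leaves $(a',b')$ upwards, without revisiting $x_0$), and then, by goodness of $(a,b)$ and the strong Markov property, returns to $x_0$ before exiting $(a,b)$. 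On this event $\sigma(a',b')<H(x_0)<\sigma(a,b)$ and the Feynman-Kac integrand is strictly positive, so $\EE_{x_0}(e^{-\rho_{a',b'}H(x_0)}{\mathcal E}_{H(x_0)},H(x_0)<\sigma(a,b))>1$. The map $q\mapsto \EE_{x_0}(e^{-qH(x_0)}{\mathcal E}_{H(x_0)},H(x_0)<\sigma(a,b))$ is strictly decreasing (since $H(x_0)>0$ almost surely), so this forces $\rho_{a',b'}<\rho_{a,b}$. The strict inequality $\rho_{a,b}<\lambda$ is obtained by the analogous comparison between $\{H(x_0)<\sigma(a,b)\}$ and the full event $\{H(x_0)<\infty\}$: trajectories which flow past $b$ without jump (positive probability) and are later brought back to $x_0$ by irreducibility of $X$ on $(0,\infty)$ show that $L_{x_0,x_0}(\rho_{a,b})>1$, and the definition \eqref{e:lambda} together with the strict decrease and right-continuity of $L_{x_0,x_0}$ on its effective domain then forces $\rho_{a,b}<\lambda$.

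For part (ii), the upper bound $\sup\rho_{a,b}\leq\lambda$ is immediate from (i). For the matching lower bound, Lemma~\ref{L8} supplies a nested sequence of good intervals $(a_n,b_n)$ with $a_n\downarrow 0$ and $b_n\uparrow\infty$. Writing $\rho_n\coloneqq\rho_{a_n,b_n}$, part~(i) ensures that $(\rho_n)$ is strictly increasing and bounded above by $\lambda$; let $\rho^{*}$ be its limit. Using $\rho_n\leq\rho^{*}$ in Proposition~\ref{P1} and monotonicity of $q\mapsto e^{-qH}$ gives $\EE_{x_0}(e^{-\rho^{*}H(x_0)}{\mathcal E}_{H(x_0)},H(x_0)<\sigma(a_n,b_n))\leq 1$ for every $n$. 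Since $\sigma(a_n,b_n)\nearrow\infty$ almost surely (the intervals exhaust $(0,\infty)$), monotone convergence yields $L_{x_0,x_0}(\rho^{*})\leq 1$, and then the definition of $\lambda$ combined with the strict decrease of $L_{x_0,x_0}$ on its effective domain forces $\rho^{*}\geq\lambda$, hence $\rho^{*}=\lambda$.

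The delicate point will be justifying the two strict inequalities in (i): neither $\rho_{a',b'}<\rho_{a,b}$ nor $\rho_{a,b}<\lambda$ is purely formal from monotonicity in $q$, and each relies on exhibiting an explicit positive-probability set of trajectories that uses the upward flow together with the goodness of $(a,b)$ and the irreducibility of $X$ on $(0,\infty)$.
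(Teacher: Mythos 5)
Your proposal is correct and follows essentially the same route as the paper: apply Proposition~\ref{P1} on the diagonal, compare the normalised expectations across nested events $\{H(x_0)<\sigma(a',b')\}\subset\{H(x_0)<\sigma(a,b)\}\subset\{H(x_0)<\infty\}$, and pass to the limit by monotone convergence. The only presentational differences are that for $\rho_{a,b}<\lambda$ the paper evaluates the restricted expectation at $q=\lambda$ (getting $<1$) whereas you evaluate $L_{x_0,x_0}$ at $q=\rho_{a,b}$ (getting $>1$), and in (ii) the paper takes the supremum over all good intervals rather than a nested sequence; one small point you should add is that Lemma~\ref{L8} does not literally produce a nested sequence, but such a sequence is easily extracted by choosing $\varepsilon_{n+1}<\min(a_n,1/b_n)$ at each step so that the new good interval strictly contains the previous one.
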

\begin{proof} 
(i) Since $\sigma(a',b')\leq \sigma(a,b)$ and the inequality is strict with positive $\PP_x$-probability for any $x\in [a',b']$, it follows from Proposition \ref{P1} that 
$$\EE_x\left({\mathcal E}_{H(x)} \e^{-\rho_{a',b'}H(x)}, H(x) < \sigma(a,b)\right)> 1 =
\EE_x\left({\mathcal E}_{H(x)} \e^{-\rho_{a,b}H(x)}, H(x) < \sigma(a,b)\right).$$
This forces $\rho_{a',b'} < \rho_{a,b}$. 
Similarly,
$$\EE_x({\mathcal E}_{H(x)}\e^{-\lambda H(x)}, H(x)<\sigma(a,b))<\EE_x({\mathcal E}_{H(x)}\e^{-\lambda H(x)}, H(x)<\infty)\leq 1,$$
where the second inequality follows from the  right-continuity of the Laplace transform $L_{x,x}$ and the definition \eqref{e:lambda} of the Malthus exponent. This yields $\rho_{a,b} < \lambda$.

(ii) Set $\bar\rho\coloneqq\sup\{\rho_{a,b}: (a,b)\text{ is a good interval}\}$. Then, by monotone convergence and Lemma \ref{L8}, we have 
\begin{eqnarray*}
& &\EE_x\left({\mathcal E}_{H(x)} \e^{-\bar\rho H(x)}, H(x)<\infty\right)\\
&=&\sup\{ \EE_x\left({\mathcal E}_{H(x)} \e^{-\bar\rho H(x)}, H(x)<\sigma(a,b)\right): (a,b)\text{ is a good interval}\}\\
&\leq &\sup\{ \EE_x\left({\mathcal E}_{H(x)} \e^{-\rho_{a,b} H(x)}, H(x)<\sigma(a,b)\right):  (a,b)\text{ is a good interval}\},
\end{eqnarray*}
showing that $L_{x,x}(\bar\rho)\leq 1$. Hence $\bar\rho\geq \lambda$, and the converse inequality follows from (i). 
\end{proof} 

\subsection{Letting the upper-boundary point go to $\infty$}

Next, for $a>0$, we write 
$$\sigma(a,\infty)=\lim_{b\to \infty}\sigma(a,b)=\inf\{t>0: X_t<a\}$$
for the first passage time below the level $a$ and state the main result of this sub-section:

\begin{proposition} \label{P8} Assume $\limsup_{x\to \infty} c(x)/x<\lambda$. Then the following hold:
\begin{enumerate}
\item[(i)]
There exists a good interval $(a,b)$ such that 
$\sup_{x\geq b} c(x)/x < \rho_{a,b}$. 

\item[(ii)] For every $b'\in(a,b)$ with $\sup_{x\geq b'} c(x)/x \leq \rho_{a,b}$, there exists $\lambda_a\in [\rho_{a,b}, \lambda)$ with 
$$\EE_{b'} \left( {\mathcal E}_{H(b')} \e^{-\lambda_{a} H(b')}, H(b')<\sigma(a, \infty)\right) =1.$$
\end{enumerate}
\end{proposition}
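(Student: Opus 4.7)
I would combine Lemma~\ref{L9}(ii), which identifies $\lambda=\sup\{\rho_{a,b}:(a,b)\text{ good}\}$, with Lemma~\ref{L8}, which provides good intervals with $a$ arbitrarily small and $b$ arbitrarily large. Pick $\delta>0$ with $\limsup_{x\to\infty}c(x)/x<\lambda-2\delta$, and then $b_0$ with $\sup_{x\ge b_0}c(x)/x<\lambda-2\delta$. By Lemma~\ref{L8} there is a good interval $(a,b)$ with $b\ge b_0$, and by Lemma~\ref{L9}(i)--(ii), replacing it if necessary by a larger good interval containing it, I can arrange $\rho_{a,b}>\lambda-\delta$ while keeping $b\ge b_0$. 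Then $\sup_{x\ge b}c(x)/x\le\sup_{x\ge b_0}c(x)/x<\lambda-2\delta<\rho_{a,b}$, as required.

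\textbf{Part (ii), construction of $\lambda_a$.} Fix $(a,b)$ from (i) and $b'$ as in the hypothesis, and introduce
\[
\varphi(q):=\EE_{b'}\!\left(\mathcal{E}_{H(b')}\e^{-qH(b')},\,H(b')<\sigma(a,\infty)\right),
\]
a non-increasing convex function of $q$, continuous on its effective domain by standard Laplace-transform considerations. The plan is to realise $\lambda_a$ as the monotone limit of principal eigenvalues of nested compact problems. Take a sequence of good intervals $(a,b_n)$ with $b_n\uparrow\infty$ (guaranteed by Lemma~\ref{L8}, possibly after replacing $a$ by a slightly smaller value still preserving the hypotheses) and set $\rho_n:=\rho_{a,b_n}$; by Lemma~\ref{L9}(i)--(ii) this sequence is non-decreasing with $\rho_n<\lambda$, so $\lambda_a:=\lim_n\rho_n\le\lambda$ exists. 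Proposition~\ref{P1} applied to $(a,b_n)$ with $x=y=b'$ gives $\Phi_n(\rho_n)=1$, where
\[
\Phi_n(q):=\EE_{b'}\!\left(\mathcal{E}_{H(b')}\e^{-qH(b')},\,H(b')<\sigma(a,b_n)\right),
\]
and monotone convergence yields $\Phi_n(q)\uparrow\varphi(q)$ pointwise. Each $\Phi_n$ being decreasing in $q$: for $q>\lambda_a$, $\Phi_n(q)\le\Phi_n(\rho_n)=1$, hence $\varphi(q)\le 1$; for $q<\lambda_a$, eventually $\Phi_n(q)\ge\Phi_n(\rho_n)=1$, hence $\varphi(q)\ge 1$. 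Continuity of $\varphi$ then forces $\varphi(\lambda_a)=1$, and $\lambda_a\ge\rho_{a,b}$ holds by construction since $\rho_1=\rho_{a,b}$.

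\textbf{The main obstacle: $\lambda_a<\lambda$ strictly.} This is where the assumption $\sup_{x\ge b'}c(x)/x\le\rho_{a,b}$ and the irreducibility~\eqref{e:irred} must enter. My plan is to prove $\varphi(\lambda)<1$, which together with the monotonicity and continuity of $\varphi$ immediately gives $\lambda_a<\lambda$. Since $L_{b',b'}(\lambda)=1$ (by continuity of $L_{b',b'}$ at its critical point, as already exploited in the proof of Lemma~\ref{L9}(ii)) and $\varphi(\lambda)\le L_{b',b'}(\lambda)$ trivially (by restricting the event to $H(b')<\sigma(a,\infty)$), it suffices to show that the gap
\[
L_{b',b'}(\lambda)-\varphi(\lambda)=\EE_{b'}\!\left(\mathcal{E}_{H(b')}\e^{-\lambda H(b')},\,\sigma(a,\infty)\le H(b')<\infty\right)
\]
is strictly positive. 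Irreducibility guarantees that trajectories from $b'$ visit $(0,a)$ with positive $\PP_{b'}$-probability, and the sup condition furnishes a Lyapunov-type control on $[b',\infty)$ preventing the Feynman--Kac-weighted process from losing all mass to infinity after such a dip, so that the post-dip return to $b'$ still contributes positive weight. Making this rigorous---in particular ruling out pathological configurations in which every dipping trajectory fails to return to $b'$---is the principal technical difficulty I expect in the proof.
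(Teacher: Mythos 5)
Your part (i) is fine and is essentially the paper's argument (the paper picks a good interval with $\rho_{\alpha,\beta}$ above the limsup and then a larger good interval whose upper endpoint exceeds the last point where $c(x)/x\geq\rho_{\alpha,\beta}$; your $\delta$-juggling achieves the same). Part (ii), however, has two genuine gaps. First, your construction needs a sequence of \emph{good} intervals $(a,b_n)$ with the \emph{same} lower endpoint $a$ and $b_n\uparrow\infty$. Lemma \ref{L8} does not provide this: it produces good intervals whose lower endpoints may vary (and shrink) with $n$, and goodness is not preserved by enlarging only the upper endpoint, since from a point above $b$ the process may be unable to descend without first dipping below $a$. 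If you let the lower endpoints tend to $0$ as well, then by monotone convergence $\Phi_n(q)$ increases to $L_{b',b'}(q)$, not to $\varphi(q)=\EE_{b'}({\mathcal E}_{H(b')}\e^{-qH(b')},H(b')<\sigma(a,\infty))$, and the scheme no longer says anything about the event $\{H(b')<\sigma(a,\infty)\}$ appearing in the statement.

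Second, even granting such a sequence, the limiting argument does not yield $\varphi(\lambda_a)=1$, nor $\lambda_a<\lambda$. You obtain $\varphi\le 1$ on $(\lambda_a,\infty)$ (hence $\varphi(\lambda_a)\le1$ by right-continuity) and $\varphi\ge1$ strictly to the left of $\lambda_a$, but ``continuity of $\varphi$ at $\lambda_a$'' requires $\varphi$ to be finite in a left neighbourhood of $\lambda_a$, and you never prove finiteness of $\varphi$ anywhere below $\lambda$ --- indeed your construction of $\lambda_a$ never uses the hypothesis $\sup_{x\ge b'}c(x)/x\le\rho_{a,b}$, which is a warning sign, since without it $\varphi$ can be infinite on all of $(-\infty,\lambda)$ and then $\varphi(\lambda_a)<1$ and $\lambda_a=\lambda$ cannot be excluded. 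This finiteness is precisely the analytic heart of the paper's proof: with a single good interval from (i), one shows $1\le\Phi(\rho_{a,b})<\infty$, the lower bound by Proposition \ref{P1}, and the upper bound by splitting the excursion at $\sigma(b',\infty)$, using $\sup_{x\ge b'}c(x)/x\le\rho_{a,b}$ to get ${\mathcal E}_{\sigma(b',\infty)}\e^{-\rho_{a,b}\sigma(b',\infty)}\le1$ during the passage above $b'$, and then bounding the post-jump contribution by $h_{a,b}(x)/h_{a,b}(b')$ via Proposition \ref{P1} and the boundedness of $h_{a,b}$; $\lambda_a$ is then simply the root of $\Phi(q)=1$, which lies in $[\rho_{a,b},\lambda)$ because $\Phi(\lambda)<1$. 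Your plan for the strict inequality at $\lambda$ (positivity of the dip-below-$a$-and-return contribution, via irreducibility) is in the right spirit and is what the paper asserts there, but it is not the main missing piece; also note that your claim $L_{b',b'}(\lambda)=1$ ``by continuity at the critical point'' is false in general --- only $L_{b',b'}(\lambda)\le1$ holds, equality being exactly the nontrivial condition \eqref{e:condBW} --- though fortunately only the inequality is needed.
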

\begin{proof} (i) We first use Lemma \ref{L9} and the assumption that $\limsup_{x\to \infty} c(x)/x<\lambda$ to find a good interval $(\alpha, \beta)$  with $\limsup_{x\to \infty} c(x)/x<\rho_{\alpha, \beta}$. Then consider 
$$\gamma\coloneqq \sup\{x>0: c(x)/x\geq \rho_{\alpha,\beta}\}<\infty,$$ 
and choose a good interval $(a,b)$ with
$0<a<\alpha$ and $b>\beta\vee \gamma$. By Lemma \ref{L9}(i), we have then $\rho_{a,b}>\rho_{\alpha, \beta}$, and {\it a fortiori} $c(x)/x<\rho_{a,b}$ for all $x\geq b$. 

 (ii) Consider the convex and nonincreasing function $\Phi: \RR\to (0,\infty]$ defined by 
$$\Phi(q)\coloneqq \EE_{b'} \left( {\mathcal E}_{H(b')} \e^{-q H(b')}, H(b')<\sigma(a, \infty)\right).$$
Clearly from \eqref{e:defL} and \eqref{e:lambda}, $\Phi(\lambda)<L_{b',b'}(\lambda) \leq 1$, and we now check that $\Phi(\rho_{a,b})\in[1,\infty)$. This will entail our  claim, by taking for $\lambda_a$ the unique solution to $\Phi(q)=1$.

The lower bound should be plain from Proposition \ref{P1}; indeed\footnote{ We point out that this inequality may actually be an equality, as it may happen that the events $\{H(b')<\sigma(a, b)\}$ and $\{H(b')<\sigma(a, \infty)\}$ coincide $\PP_{b'}$-a.s. Specifically, irreducibility may fail for the process $X$ killed when crossing the level $a$, 
and it can occur that once the process $X$ becomes larger than $b$, it can no longer visit $b'$ without exiting first from $[a,\infty)$. Nonetheless, irreducibility will not be an issue in this subsection. }
$$\Phi(\rho_{a,b}) \geq \EE_{b'} \left( {\mathcal E}_{H(b')} \e^{-\rho_{a,b} H(b')}, H(b')<\sigma(a, b)\right)=1.$$
On the other hand, recall that the process started from $b'$ first stays in $[b',\infty)$ until it eventually  makes a jump across $b'$ at time $\sigma(b',\infty)$, and then stays in $(0,b')$ until it eventually hits $b'$ for the first time at the instant $H(b')$. 
Recall that $\sup_{x\geq b'} c(x)/x \leq  \rho_{a,b}$, which ensures that $ {\mathcal E}_{\sigma(b',\infty)} \e^{-\rho_{a,b} \sigma(b',\infty)}\leq 1$.
An application of the strong Markov property at time $\sigma(b',\infty)$ then enables us to bound $\Phi(\rho_{a,b})$ 
from above by 
\begin{eqnarray*}
& &\int_{[a,b')} \EE_x\left( {\mathcal E}_{H(b')} \e^{-\rho_{a,b} H(b')}, H(b')<\sigma(a, b)\right) \PP_{b'}(X_{\sigma(b',\infty)}\in\dd x, \sigma(b',\infty)<\infty)\\
&=&\int_{[a,b')} \frac{h_{a,b}(x)}{h_{a,b}(b')}\PP_{b'}(X_{\sigma(b',\infty)}\in\dd x, \sigma(b',\infty)<\infty),\end{eqnarray*}
where the equality is seen from Proposition \ref{P1}.
Since $h_{a,b}$ is bounded and $h_{a,b}(b')>0$, we conclude that $\Phi(\rho_{a,b})<\infty$. 
\end{proof}

In turn, Proposition \ref{P8}(ii) yields another remarkable martingale.
\begin{corollary} \label{C4} Under the same assumptions and notation as in Proposition \ref{P8},
introduce the function
$$g_a(x)\coloneqq \EE_x\left({\mathcal E}_{H(b')}\e^{-\lambda_a H(b')}, H(b')<\sigma(a,\infty)\right), \qquad x\geq a.
$$
Then the process
$${\mathcal M}_a(t)\coloneqq g_a(X_t) {\mathcal E}_t \e^{-\lambda_a t} \Indic{t<\sigma(a,\infty)}, \qquad t\geq 0$$
 is a $\PP_x$-martingale for all $x\in[a,\infty)$. 
\end{corollary}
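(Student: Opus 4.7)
The plan is to reduce the martingale claim, via the Markov property at an intermediate time $s\leq t$, to the semigroup-type identity
\[
F(z,u) := \EE_z\bigl[g_a(X_u)\,{\mathcal E}_u\,\e^{-\lambda_a u}\,\Indic{u<\sigma(a,\infty)}\bigr] = g_a(z) \qquad \text{for every } z\geq a,\ u\geq 0.
\]
Indeed, once this is established, $\EE_x[{\mathcal M}_a(t)\mid{\mathcal F}_s]={\mathcal E}_s\,\e^{-\lambda_a s}\,F(X_s,t-s)\,\Indic{s<\sigma(a,\infty)}$ by the Markov property at time $s$, which equals ${\mathcal M}_a(s)$ on $\{s<\sigma(a,\infty)\}$, while both sides vanish on the complementary event.

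To prove the displayed identity, the idea is to use $b'$ as a regeneration point. Extending $g_a\equiv 0$ on $(0,a)$ (so that $g_a(X_{\sigma(a,\infty)})=0$) and recalling that $g_a(b')=1$ is precisely the defining equation of $\lambda_a$ from Proposition \ref{P8}(ii), I would apply the strong Markov property at the stopping time $H(b')\wedge u$, together with the multiplicative property of ${\mathcal E}$, to obtain parallel decompositions of $g_a(z)$ and $F(z,u)$ whose ``unreturned'' parts coincide. Subtraction then yields
\[
F(z,u)-g_a(z) = \EE_z\bigl[{\mathcal E}_{H(b')}\,\e^{-\lambda_a H(b')}\,\Indic{H(b')\leq u,\ H(b')<\sigma(a,\infty)}\,(F(b',u-H(b'))-1)\bigr],
\]
so the task reduces to proving $F(b',u)=1$ for all $u\geq 0$.

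Setting $\phi(u):=F(b',u)-1$ and specialising the above at $z=b'$, the problem becomes the homogeneous renewal equation $\phi=\mu\ast\phi$ on $[0,\infty)$, where $\mu(\dd s):=\EE_{b'}[{\mathcal E}_{H(b')}\e^{-\lambda_a H(b')}\Indic{H(b')<\sigma(a,\infty)};H(b')\in \dd s]$ is a probability measure by Proposition \ref{P8}(ii), with no atom at $0$ (since $H(b')>0$ $\PP_{b'}$-a.s.). Iterating, $\phi=\mu^{\ast n}\ast\phi$ for every $n\geq1$; provided $\phi$ is bounded on each compact $[0,U]$, the bound $|\phi(u)|\leq \sup_{[0,U]}|\phi|\cdot\mu^{\ast n}([0,U])$ for $u\in[0,U]$, together with $\mu^{\ast n}([0,U])\to 0$ as $n\to\infty$ (a sum of $n$ i.i.d.~strictly positive random variables tends to $\infty$), forces $\phi\equiv 0$.

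The main obstacle will be the prerequisite boundedness of $g_a$, which is needed to control $F(b',u)\leq\|g_a\|_\infty \e^{u(q_c-\lambda_a)}$ on compacts. On $[a,b')$ the crucial observation is that, because the flow is upward and jumps are downward, the trajectory started from $y<b'$ stays in $[0,b']$ until $H(b')$; hence $\{H(b')<\sigma(a,\infty)\}=\{H(b')<\sigma(a,b)\}$, and Proposition \ref{P1} combined with $\rho_{a,b}\leq\lambda_a$ bounds $g_a(y)$ by $\|h_{a,b}\|_\infty/h_{a,b}(b')$. For $x\geq b'$, the strong Markov property at $\sigma:=\inf\{t>0:X_t<b'\}$ reduces matters to the previous case, the multiplicative weight ${\mathcal E}_\sigma\e^{-\lambda_a\sigma}$ being controlled by the fact that $c(y)/y\leq\rho_{a,b}\leq\lambda_a$ for $y\geq b$ (Proposition \ref{P8}(i)) together with the finiteness of the sojourn time $s(b',b)$ in the intermediate interval $[b',b]$.
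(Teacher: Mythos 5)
Your proposal takes a genuinely different route from the paper, which simply observes that $g_a(b')=1$ and invokes Theorem~4.4 of \cite{BW} verbatim. You instead reconstruct the martingale identity from scratch via a renewal equation at the regeneration level $b'$: reduce to $F(b',u)\equiv 1$ where $F(z,u)=\EE_z[g_a(X_u)\mathcal{E}_u\e^{-\lambda_a u}\Indic{u<\sigma(a,\infty)}]$, then iterate the convolution identity $\phi=\mu^{\ast n}\ast\phi$ using that $\mu$ is a probability with no mass at $0$. That skeleton is correct and does capture the mechanism behind Theorem~4.4 of \cite{BW}; what the paper buys by citation is exactly the work you are redoing.

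The one place your write-up does not close is the boundedness of $g_a$ on $[b',\infty)$. You invoke Proposition~\ref{P8}(i), i.e.\ $c(x)/x<\rho_{a,b}$ only for $x\ge b$, and propose to handle the strip $[b',b]$ by ``finiteness of the sojourn time $s(b',b)$''. But $s(b',b)$ is the deterministic travel time for the flow; the actual time the process spends in $[b',b)$ before crossing below $b'$ is random and unbounded, since downward jumps landing back inside $[b',b)$ can reset the ascent arbitrarily many times. The correct and simpler route is to use the hypothesis of Proposition~\ref{P8}(ii) directly, which you are already assuming since the corollary is stated ``under the same assumptions and notation'': one has $\sup_{x\ge b'} c(x)/x\le\rho_{a,b}\le\lambda_a$, hence $\mathcal{E}_t\e^{-\lambda_a t}$ is nonincreasing while $X_t\ge b'$, so $\mathcal{E}_\sigma\e^{-\lambda_a\sigma}\le 1$ $\PP_x$-a.s.\ for $x\ge b'$, where $\sigma=\inf\{t>0: X_t<b'\}$. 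Combined with your (correct) bound $g_a\le h_{a,b}/h_{a,b}(b')$ on $[a,b')$ via Proposition~\ref{P1} and $\{H(b')<\sigma(a,\infty)\}=\{H(b')<\sigma(a,b)\}$ for starting points below $b'$, this gives $\sup_{x\ge a}g_a(x)<\infty$ and the rest of your argument goes through.
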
 
\begin{proof}
Since $g_a(b')=1$ by the very definition of $\lambda_a$ in Proposition \ref{P8}, the claim follows from the same argument as in the proof of Theorem 4.4 of \cite{BW}. \end{proof}

\subsection{Letting the lower-boundary point go to $0$}
We now rather fix the upper-boundary point $b$ and  let the lower-boundary point $a$ tend to $0$, and develop results similar to those of the preceding sub-section. Some arguments and statements need to be adapted to that case, other simply work just as well. Beware  in particular that the notation $\lambda^b$ below refers to a quantity that depends on the upper-boundary point $b$, and not to the Malthus exponent raised to the power $b$. 
Note also that due to the absence of positive jumps, the identity $\lim_{a\to 0}\sigma(a,b)=H(b)$ holds $\PP_x$-a.s. for all $x<b$. 

\begin{proposition} \label{P5} Assume  $\limsup_{x\to 0} c(x)/x<\lambda$. Then the following hold:
\begin{enumerate}
\item[(i)]
There exist a good interval  $(a, b)$ such that 
 $\sup_{x\leq a} c(x)/x <  \rho_{a,b}$. 

\item[(ii)] For every $a'\in(a,b)$ with $\sup_{x\leq a'} c(x)/x \leq   \rho_{a,b}$ and every $b''\in(a',b)$ sufficiently close to $b$, 
there exists then $\lambda^b<\lambda$ with 
$$\EE_{a'} \left( {\mathcal E}_{H(a')} \e^{-\lambda^b H(a')}, H(a')<H(b'')\right) \in(0,1].$$
\end{enumerate}
\end{proposition}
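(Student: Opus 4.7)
I would follow closely the structure of Proposition~\ref{P8}, adapted to the lower boundary.

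For part (i), invoking Lemma~\ref{L9}(ii) together with the hypothesis $\limsup_{x\to 0+}c(x)/x<\lambda$, pick a good interval $(\alpha,\beta)$ with $\rho_{\alpha,\beta}$ strictly between $\limsup_{x\to 0+}c(x)/x$ and $\lambda$. Setting $\gamma\coloneqq\inf\{x>0:c(x)/x\geq\rho_{\alpha,\beta}\}>0$, Lemma~\ref{L8} then yields a good interval $(a,b)$ with $a<\alpha\wedge\gamma$ and $b>\beta$; Lemma~\ref{L9}(i) gives $\rho_{a,b}>\rho_{\alpha,\beta}$, and by construction $\sup_{x\leq a}c(x)/x\leq\rho_{\alpha,\beta}<\rho_{a,b}$.

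For part (ii), introduce the convex nonincreasing function
\[\Psi(q)\coloneqq \EE_{a'}\bigl({\mathcal E}_{H(a')}\e^{-qH(a')},\,H(a')<H(b'')\bigr).\]
The strict bound $\Psi(\lambda)<1$ holds: indeed $\Psi(q)\leq L_{a',a'}(q)$ combined with~\eqref{e:lambda} gives $\Psi(\lambda)\leq 1$, and the event $\{H(b'')\leq H(a')<\infty\}$ (of positive probability, since with probability $p(a',b'')>0$ by~\eqref{e:nojump} the process flows from $a'$ to $b''$ without jumping and then returns to $a'$ by~\eqref{e:irred}) contributes positive mass to $L_{a',a'}(\lambda)$ but is absent from $\Psi(\lambda)$. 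For $b''$ sufficiently close to $b$ I would also argue $\Psi(\rho_{a,b})\geq 1$: Proposition~\ref{P1} on $(a,b)$ yields $\EE_{a'}({\mathcal E}_{H(a')}\e^{-\rho_{a,b}H(a')},\,H(a')<\sigma(a,b))=1$, and the decomposition $\Psi(\rho_{a,b})=(1-B(b''))+A(b'')$, according to whether the trajectory stays above $a$ up to $H(a')$, shows $B(b'')\downarrow 0$ as $b''\uparrow b$ while $A(b'')$ (the mass of trajectories dipping below $a$ and returning to $a'$ without hitting $b''$) tends to a strictly positive limit by irreducibility.

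The heart of the argument is the finiteness $\Psi(q)<\infty$ for some $q<\lambda$. Enlarge $(a,b)$ to a strictly larger good interval $(a_1,b_1)$ with $a_1<a$ and $b_1>b$; by Lemma~\ref{L9}(i), $\rho_{a,b}<\rho_{a_1,b_1}<\lambda$, and by part~(i), $\sup_{x\leq a_1}c(x)/x\leq\rho_{a,b}<\rho_{a_1,b_1}$. Let $T\coloneqq\inf\{t>0:X_t<a'\}$. Since the trajectory between $T$ and $H(a')$ lies in $(0,a')$, where $c(x)/x\leq\rho_{a_1,b_1}$, one has ${\mathcal E}_{H(a')}\e^{-\rho_{a_1,b_1}H(a')}\leq{\mathcal E}_T\e^{-\rho_{a_1,b_1}T}$ on $\{T\leq H(a')\}$, yielding
\[\Psi(\rho_{a_1,b_1})\leq \EE_{a'}\bigl({\mathcal E}_T\e^{-\rho_{a_1,b_1}T},\,T<H(b'')\bigr).\]
On $\{T<H(b'')\}$ the pre-jump position $X_{T^-}$ lies in the compact $[a',b'']\subset(a_1,b_1)$, where $h_{a_1,b_1}$ is bounded below by some $m>0$; applying optional sampling to the martingale ${\mathcal M}_{a_1,b_1}$ of Lemma~\ref{L7} along bounded stopping times approaching $T$ from below and passing to the liminf via Fatou then yields $m\,\EE_{a'}({\mathcal E}_T\e^{-\rho_{a_1,b_1}T},\,T<H(b''))\leq h_{a_1,b_1}(a')$, giving $\Psi(\rho_{a_1,b_1})<\infty$. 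Convexity then makes $\Psi$ continuous on $[\rho_{a_1,b_1},\lambda]$, and combined with $\Psi(\lambda)<1$ and $\Psi(\rho_{a,b})\geq 1$ from the preceding paragraph, either the intermediate value theorem or monotonicity yields $\lambda^b\in[\rho_{a_1,b_1},\lambda)$ with $\Psi(\lambda^b)\in(0,1]$. The main obstacle is the left-limit martingale estimate, since ${\mathcal M}_{a_1,b_1}$ fails to be uniformly integrable at its killing time and extracting the useful bound requires careful handling of the left limit just before the downward jump.
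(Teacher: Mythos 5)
Part (i) is fine and matches the paper's (sketched) argument. For part (ii), your overall skeleton is right -- show $\Psi(q)<\infty$ for some $q<\lambda$, use $\Psi(\lambda)<1$, and conclude by convexity -- but the crucial finiteness step has a genuine gap, which you yourself flag. The jump time $T=\sigma(a',\infty)$ in a piecewise deterministic Markov process is \emph{totally inaccessible}: there is no announcing sequence of stopping times $T_n\uparrow T$ with $T_n<T$, so ``optional sampling along bounded stopping times approaching $T$ from below'' is not available. Equally, optional sampling at $T$ itself is useless here because the post-jump position $X_T$ may land below $a_1$, in which case ${\mathcal M}_{a_1,b_1}(T)=0$ and the martingale inequality carries no information about $\mathcal E_T\e^{-\rho_{a_1,b_1}T}$. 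Enlarging to a yet bigger interval does not help, since the jump can overshoot any fixed $a_2>0$. The paper sidesteps this entirely by converting the expectation over the single jump time into a time integral via the \emph{predictable compensator} of the jump measure: $\EE_{a'}(\sum_t\mathcal E_t\e^{-rt}\Ind_{\{t\le\sigma(a',b''),\,X_t<a'\}})=\EE_{a'}\bigl(\int_0^{\sigma(a',b'')}\mathcal E_t\e^{-rt}(\int_0^{a'}\bar k(X_{t-},y)\,\dd y)\,\dd t\bigr)\le\Vert K\Vert\,\EE_{a'}(\int_0^{\sigma(a',b'')}\mathcal E_t\e^{-rt}\,\dd t)$, and then uses the martingale $\mathcal M_{a,b}$ at a \emph{deterministic} time $t$ together with the slack $\delta=r-\rho_{a,b}>0$ to obtain the exponentially decaying bound $\EE_{a'}(\mathcal E_t\e^{-rt},t<\sigma(a',b''))\le \e^{-\delta t}h_{a,b}(a')/\min_{[a',b'']}h_{a,b}$, which is integrable in $t$. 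This is the step you would need to borrow.

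Two further remarks. First, your attempt to show $\Psi(\rho_{a,b})\ge1$ is unnecessary: the paper only needs $\Psi(r)<\infty$ at some $r\in(\rho_{a,b},\lambda)$ together with $\Psi(\lambda)<1$; if $\Psi(r)\le1$ one simply takes $\lambda^b=r$, and only if $\Psi(r)>1$ does one invoke the intermediate value theorem. Second, the claim that $A(b'')$ has a strictly positive limit as $b''\uparrow b$ need not hold in general -- the jump kernel might never send the process from $[a',b'']$ below $a$ -- so that part of the argument is not just superfluous but also potentially false.
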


\begin{proof} (i) The argument is just the same as in Proposition \ref{P8}(i). 

(ii) The irreducibility  \eqref{e:kilirr} entails that $\PP_{a'}(H(a')<H(b''))>0$ provided that $b''\in(a',b)$ is chosen close enough to $b$. Then consider the convex and nonincreasing function $\Psi: \RR\to (0,\infty]$ defined by 
$$\Psi(q)\coloneqq \EE_{a'} \left( {\mathcal E}_{H(a')} \e^{-q H(a')}, H(a')<H(b'')\right).$$
Since $\rho_{a,b} <  \lambda$  by Lemma \ref{L9}, we may pick  $r\in
(\rho_{a,b},\lambda)$.
We shall check that $\Psi(r)<\infty$ and our  claim then follows. Indeed, if  actually $\Psi(r)\leq 1$ then we simply take $\lambda^b
=r$. Otherwise,  since we have always $\Psi(\lambda)<L_{a',a'}(\lambda)\leq 1$,  the equation $\Psi(q)=1$ has a unique solution 
 $\lambda^b\in(r, \lambda)$.

On the event $H(a')<H(b'')$, the process started from $a'$ first stays in $[a',b'']$ until  it makes a jump across $a'$ at time $\sigma(a',b'')$, and then stays in $(0,a')$ until it eventually hits $a'$ for the first time at the instant $H(a')$. 
Since 
$$\sup_{x\leq a'} c(x)/x \leq \rho_{a,b}<  r,$$ on the event that $H(a')<H(b'')$, there is the inequality 
$${\mathcal E}_{H(a')} \e^{-r H(a')} \leq {\mathcal E}_{\sigma(a',b'')} \e^{-r \sigma(a',b'')}.$$
We have therefore
\begin{eqnarray*}
\EE_{a'}\left( {\mathcal E}_{H(a')} \e^{-r H(a')}, H(a')<H(b'') \right) &\leq & \EE_{a'}\left({\mathcal E}_{\sigma(a',b'')} \e^{-r \sigma(a',b'')}, H(a')<H(b'')\right) \\
&=& \EE_{a'}\left(\sum_{t\geq 0} {\mathcal E}_{t} \e^{-r t} \Indic{t\leq \sigma(a',b''), X_t<a'}\right)\\
&= & 
\EE_{a'} \left(\int_0^{\sigma(a',b'')} {\mathcal E}_{t} \e^{-r t} \left(\int_0^{a'} \bar k(X_{t-},y) \dd y\right)\dd t\right)\\
&\leq & \| K\|
\EE_{a'}\left(\int_0^{ \sigma(a',b'')} {\mathcal E}_{t} \e^{-r t} \dd t\right),
\end{eqnarray*}
where the third line stems from the fact that the predictable compensator of the  jump process of $X$ is $\bar k(X_{t-},y) \dd y \dd t$, and on the last line, $\| K\|=\sup_{x>0} K(x)$ is the maximal jump rate.

We then write $\delta\coloneqq r-\rho_{a,b}>0$ and use the inequality
\begin{eqnarray*}
 \EE_{a'}\left({\mathcal E}_{t} \e^{-r t}, t<\sigma(a',b'')\right) &\leq& \frac{\e^{-\delta t}}{\min_{[a',b'']} h_{a, b}} 
 \EE_{a'}\left({\mathcal E}_{t} \e^{-\rho_{a,b}t} h_{a, b}(X_t), t<\sigma(a,b)\right)\\
 &=&  \e^{-\delta t} \frac{  h_{a, b}(a')}{\min_{[a',b'']} h_{a, b}},
 \end{eqnarray*}
 where the equality is seen from Lemma \ref{L7}. Since $\min_{[a',b'']} h_{a, b}>0$  by Proposition \ref{P2}, 
 we now get that
 $$\int_0^{\infty} \EE_{a'}\left( {\mathcal E}_{t} \e^{-r t} \Indic{ t<\sigma(a',b'')}\right)\dd t< \infty,$$
which entails our claim. 
\end{proof}

Proposition \ref{P5}(ii) enables us to repeat the argument for the proof of Lemma \ref{L2}, and this yields the following weak analog of Corollary \ref{C4}. 
\begin{corollary} \label{C6} Notation and assumptions are as in Proposition \ref{P5}.
 For $0<x< b''$, we consider
$$g^b(x)\coloneqq \EE_x\left({\mathcal E}_{H(a')}\e^{-\lambda^b H(a')}, H(a')<H(b'')\right).
$$
The process
$${\mathcal S}^b(t)\coloneqq g^b(X_t) {\mathcal E}_t \e^{-\lambda^b t} \Indic{t<H(b'')}, \qquad t\geq 0.$$
 is then a $\PP_x$-supermartingale for every $0<x<b''$. 
\end{corollary}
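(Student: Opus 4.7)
The plan is to repeat \emph{verbatim} the argument for Lemma \ref{L2}, with $a'$ in the role of $x_1$ and the killing time $H(b'')$ replacing the trivial killing at $\infty$; the decisive input analogous to $L_{x_1,x_1}(q)\leq 1$ is the inequality $g^b(a')\leq 1$ provided by Proposition \ref{P5}(ii). Concretely, I would first introduce the counting process
\begin{equation*}
N_t\coloneqq \#\bigl\{s\in(0,t\wedge H(b'')]:X_s=a'\bigr\},\qquad t\geq 0,
\end{equation*}
together with the $n$-th return time $R_n\coloneqq \inf\{t>0:N_t=n\}$ and the discrete filtration ${\mathcal G}_n\coloneqq {\mathcal F}_{R_n}$. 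Just as in Lemma \ref{L2}, $N_t+1$ is a $({\mathcal G}_n)$-stopping time for every $t\geq 0$.

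Next, by the Markov property at time $t$, the multiplicativity of ${\mathcal E}$, and the fact that $X$ has no positive jumps (so $H(b'')\circ \theta_t=H(b'')-t$ on $\{t<H(b'')\}$), the very definition of $g^b$ translates into the key identity
\begin{equation*}
{\mathcal S}^b(t)=\EE_x\bigl(\e^{-\lambda^b R_{N_t+1}}{\mathcal E}_{R_{N_t+1}}\Indic{R_{N_t+1}<H(b'')}\,\big|\,{\mathcal F}_t\bigr),
\end{equation*}
which is the exact analog of equation \eqref{e:towerprep}. Meanwhile, an application of the strong Markov property at $R_n$ shows that
\begin{equation*}
M_n\coloneqq \e^{-\lambda^b R_n}{\mathcal E}_{R_n}\Indic{R_n<H(b'')},\qquad n\geq 1,
\end{equation*}
is a nonnegative $\PP_x$-supermartingale in the filtration $({\mathcal G}_n)$: on $\{R_n<H(b'')\}$ the shifted process $X\circ\theta_{R_n}$ has the law of $X$ under $\PP_{a'}$, so $\EE_x(M_{n+1}\mid{\mathcal G}_n)=M_n\cdot g^b(a')\leq M_n$, while $M_{n+1}$ vanishes on the complement.

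Finally, the argument concludes exactly as in Lemma \ref{L2}: since $N_s+1\leq N_t+1$ for $s\leq t$, optional sampling for nonnegative supermartingales yields $\EE_x(M_{N_t+1}\mid{\mathcal G}_{N_s+1})\leq M_{N_s+1}$, and taking conditional expectation with respect to ${\mathcal F}_s\subseteq{\mathcal F}_{D_s}={\mathcal G}_{N_s+1}$ (where $D_s$ denotes the first return to $a'$ after $s$ occurring before $H(b'')$) and invoking the displayed identity twice via the tower property produces $\EE_x({\mathcal S}^b(t)\mid{\mathcal F}_s)\leq {\mathcal S}^b(s)$. The only point requiring some care, which I expect to be the main (though mild) obstacle, is bookkeeping around the killing at $H(b'')$: one must track the event $\{R_n<H(b'')\}$ consistently through each strong Markov step, but since $M_n$ and ${\mathcal S}^b$ both vanish outside this event, the computations from Lemma \ref{L2} transfer without genuine modification.
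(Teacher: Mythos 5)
Your proposal is correct and is exactly the argument the paper intends: the paper's own proof consists precisely of the remark that one should repeat the proof of Lemma~\ref{L2} with $a'$ in place of $x_1$, with $g^b(a')\leq 1$ from Proposition~\ref{P5}(ii) replacing $L_{x_1,x_1}(q)\leq 1$, and with the additional killing at $H(b'')$ tracked through each step as you describe. (One minor attribution quibble: $H(b'')\circ\theta_t=H(b'')-t$ on $\{t<H(b'')\}$ holds for any hitting time; where the absence of positive jumps actually matters is in guaranteeing that $X_t<b''$ on $\{t<H(b'')\}$, so that $g^b(X_t)$ is well-defined.)
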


We have now completed all the preliminary steps needed to establish Theorem \ref{T2}.
\subsection{Proof of \eqref{e:condBW2}}
We first pick two good intervals  $(a,b)$ and $(a',b')$ with $0<a<a'<b'<b$ sufficiently large such that
$$\sup_{(0,a']\cup [b',\infty)} c(x)/x < \rho_{a,b}$$
(recall that this is indeed possible, thanks to Lemma \ref{L9} and the assumptions of Theorem \ref{T2}).
The irreducibility of the process killed when exiting from $[a,b]$ shows that provided we choose $b''\in(b',b)$ close enough to $b$, then $\PP_{b'}(H(a')<H(b''))>0$. Next, in the notation introduced in Proposition \ref{P8} and Proposition \ref{P5}, take any  $\max\{\rho_{a,b}, \lambda_a, \lambda^b\}< q < \lambda$. In particular, 
$$c(x)/x< q \ \text{ for all }\ x\in(0,a']\cup[b',\infty).$$

We shall prove that \eqref{e:condBW2} holds with $x=b'$. 
 We thus let the process start from $b'$ and split the excursion interval $(0, H(b'))$ at times $\sigma(b',\infty)$ and $\sigma(a',\infty)$. Recall that $\sigma(b',\infty)$ is the first instant when $X$ jumps across $b'$, so plainly $\sigma(b',\infty)\leq \sigma(a',\infty)$ $\PP_{b'}$-a.s.,  and  these two first-exit times may coincide with positive probability. Because $X$ stays in $[b',\infty)$ until time $\sigma(b',\infty)$, we have 
$${\mathcal E}_{\sigma(b',\infty)} \e^{-q \sigma(b',\infty)}\leq 1\qquad \PP_{b'}\text{-a.s.},$$
and we see from the strong Markov property that \eqref{e:condBW2} will follow from 
 \begin{equation} \label{e:condBW22}
 \sup_{x\leq b'} \EE_x\left( {\mathcal E}_{H(b')} \e^{-q H(b')}, H(b')<\infty\right) < \infty.
 \end{equation}
 
 First consider the case $x\leq a'$. The process started from $x$ stays in $(0,a']$ until time $H(a')$,  thus
 $${\mathcal E}_{H(a')} \e^{-q H(a')}\leq 1\qquad \PP_x\text{-a.s.},$$
and therefore we have, again from the strong Markov property,
$$  \EE_x\left( {\mathcal E}_{H(b')} \e^{-q H(b')}, H(b')<\infty\right) \leq  \EE_{a'}\left( {\mathcal E}_{H(b')} \e^{-q H(b')}, H(b')<\infty\right).$$
In the notation of Corollary \ref{C6}, Proposition  \ref{P5}(ii) ensures that  $g^b(a')\in(0,1]$, and since 
$b''$ has been chosen such that $\PP_{b'}(H(a')<H(b''))>0$, we have also $g^b(b')>0$. 
 We now deduce from the optional sampling theorem applied to the supermartingale ${\mathcal S}^b$ in Corollary \ref{C6} and the stopping time $H(b')<H(b'')$, that the right-hand side above is bounded by $g^b(a')/g^b(b')$, and hence
 \begin{equation} \label{e:clafin}
  \sup_{x\leq a'} \EE_x\left( {\mathcal E}_{H(b')} \e^{-q H(b')}, H(b')<\infty\right) \leq  \frac{g^b(a')}{g^b(b')}< \infty.
  \end{equation}
 
 Next, we consider the case $x\in(a',b']$, and distinguish whether the process exits from $[a',b']$ through the upper or the lower boundary.  
 On the one hand, Lemma \ref{L7} and the optional sampling theorem readily yield for every $x\in[a',b']$:
 $$\EE_x\left( {\mathcal E}_{H(b')} \e^{-q H(b')}, H(b')<\sigma(a,b)\right)\leq \frac{h_{a,b}(x)}{h_{a,b}(b')},$$
and {\it a fortiori} 
 $$\sup_{a'\leq x \leq b'} \EE_x\left( {\mathcal E}_{H(b')} \e^{-q H(b')}, H(b')\leq \sigma(a',b')\right)\leq \frac{\max_{[a',b']}h_{a,b}}{h_{a,b}(b')} <\infty. $$
 On the other hand, the same argument as in the proof of Proposition \ref{P5}(ii) shows that for every $x\in[a',b']$:
 \begin{eqnarray*}
\EE_{x}\left({\mathcal E}_{\sigma(a',b')} \e^{-q \sigma(a',b')}, \sigma(a',b')<H(b')\right) 
&=& \EE_{x}\left(\sum_{t\geq 0} {\mathcal E}_{t} \e^{-q t} \Indic{t\leq \sigma(a',b'), X_t<a'}\right)\\
&\leq & \| K\|
\EE_{x}\left(\int_0^{ \sigma(a',b')} {\mathcal E}_{t} \e^{-q t} \dd t\right)\\
&\leq & \frac{  \| K\| h_{a, b}(x)}{\delta \min_{[a',b']} h_{a, b}},
\end{eqnarray*}
where $\delta=q-\rho_{a',b'}>0$ and $\| K\|$ is the maximal jump rate. 
Combining this with \eqref{e:clafin} and the strong Markov property, we conclude that 
$$\sup_{a'\leq x \leq b'} \EE_x\left( {\mathcal E}_{H(b')} \e^{-q H(b')},\sigma(a',b')<  H(b')<\infty\right) <\infty.$$
This completes the proof of \eqref{e:condBW2}, and thus of Theorem \ref{T1}.

\subsection{Miscellaneous comments about \eqref{e:ccbis}} \label{s:MC}
\begin{enumerate}

\item We have argued in the Introduction that  \eqref{e:ccbis} yields the more explicit criterion \eqref{e:ccter} when $X$ is recurrent, and we now discuss simple conditions in terms of the growth and fragmentation rates that ensure recurrence. Since $X$ is irreducible and its trajectories have no positive jumps, point-recurrence can only fail if sample paths converge either to $0$ or to $\infty$ a.s., which can easily be impeded by Foster-type conditions (see for instance \cite{MT-book} and \cite{Hai-conv} for general references). Typically, consider the power function $f(x)=x^r$ for some $r>0$,  and assume that ${\mathcal G}f(x)\leq 0$ for all large enough $x$. That is, for some $x_{\infty}>0$:
\begin{equation}\label{e:mieux1}
r c(x) x^{r} + \int_{0}^{x} (y^r-x^r) y k(x,y)\dd y \leq 0 \qquad\text{ for all }x\geq x_{\infty}.
\end{equation}
Then the process $f(X_{t\wedge \sigma(x_{\infty},\infty)})$ is a $\PP_x$-supermartingale for every $x\geq x_{\infty}$, 
and it follows that $\PP_x(\lim_{t\to \infty} X_t=\infty)=0$.  Similarly, working now with negative powers and considering $g(x)=x^{-q}$ for some $q>0$, if for some $x_0>0$
\begin{equation}\label{e:mieux2}-q c(x) x^{-q} + \int_{0}^{x} (y^{-q}-x^{-q}) y k(x,y)\dd y \leq 0 \qquad\text{ for all }x\leq x_0,
\end{equation}
then  the process $g(X_{t\wedge H(x_0)})$ is a $\PP_x$-supermartingale for every $x\leq x_0$, 
and it follows that $\PP_x(\lim_{t\to \infty} X_t=0)=0$.  
We refer further to \citet{Bou}, who considered specifically positive recurrence for the family of piecewise deterministic Markov processes that arise in our setting; see notably Theorem 4 and also Remark 3 there.
Putting the pieces together and excluding implicitly the case of linear growth\footnote{Linear growth rate $c(x)=ax$ was already discussed in details in Section 6 of \cite{BW}}, we get that exponentially fast convergence \eqref{e:expconv} holds provided that  \eqref{e:ccter}, \eqref{e:mieux1} and \eqref{e:mieux2} are satisfied.

Recall that \citet{DoumGab} obtained conditions that ensure the existence of eigenelements and thus also  the Malthusian behavior \eqref{e:paradigm} by the general relative entropy method (however their approach does not yield exponential speed of convergence  \eqref{e:expconv}). The comparison of those conditions displays certain resemblance and  but also differences. For instance, (12) and (13)  in \cite{DoumGab} can be loosely related to \eqref{e:mieux1} and \eqref{e:mieux2} here;  we do not need here to make assumptions such as (5,7,10) in \cite{DoumGab}; on the other hand \cite{DoumGab} also covers the situation where the growth rate $c$ does not fulfill \eqref{e:c-bound}. 

\item If we assume self-similarity of the fragmentation kernel, that is
$$k(x,y)= x^{-1} K(x)p(y/x), \qquad 0<y<x,$$
where $p\in L^1_+([0,1])$  with $\int_0^1p(u) u \dd u=1$, then \eqref{e:mieux1} and \eqref{e:mieux2} translate respectively  into 
\begin{equation}\label{e:mieux1'}
\frac{c(x)}{xK(x)} \leq  r^{-1}\int_0^1(1-u^r)u p(u)\dd u \qquad\text{ for all }x\geq x_{\infty}
\end{equation}
and 
\begin{equation}\label{e:mieux2'}
\frac{c(x)}{xK(x)} \geq  q^{-1}\int_0^1(u^{-q}-1)u p(u)\dd u \qquad\text{ for all }x\leq x_0.
\end{equation}
Putting pieces together, we thus see that in the self-similar case, exponentially fast convergence \eqref{e:expconv} holds whenever \eqref{e:ccter}, \eqref{e:mieux1'} and \eqref{e:mieux2'} are fulfilled.

This should be compared with Theorem 1.11 of  \citet{BCG-fine} in which the existence of a spectral gap is asserted under more stringent conditions. We stress however that \cite{BCG-fine} also cover cases where the growth rate $c$ does not fulfill \eqref{e:c-bound}. If we further assume that the total fragmentation rate is constant, say $K(x)\equiv 1$, then we are in the setting of Section 7 of \cite{BW} (one says that the fragmentation rate is homogeneous), and Theorem \ref{T2} also improves Proposition 7.1 in \cite{BW}.

\item We point out that exponential speed of convergence \eqref{e:expconv} may hold without \eqref{e:ccbis}. For instance, in the case of linear growth rate $c(x)=ax$, Section 6 in \cite{BW} discusses situations where $\lambda=a$
and nonetheless \eqref{e:expconv} takes place.

\item We also recall from Corollary 4.5 and Lemma 4.6 in \cite{BW}, that when \eqref{e:ccbis} holds, then $h$ 
is an eigenfunction for the eigenvalue $\lambda$ of the growth-fragmentation operator ${\mathcal A}$ (in particular $h$ belongs to the domain of ${\mathcal A}$), and further the function $x\mapsto h(x)/x$ is continuous and bounded.

\end{enumerate}

\bibliography{AGF}

\end{document}